\documentclass[11pt,a4paper,
]{amsart}                 
\usepackage[T1]{fontenc}             
\usepackage[utf8]{inputenc}          \usepackage[english]{babel}       
\usepackage{graphicx}                      %
\usepackage[font=small]{quoting}            %
\usepackage{caption}  
\usepackage[colorlinks=true,linkcolor=blue]{hyperref}
\usepackage[top=1.2in,bottom=1.5in,left=1.2in,right=1.2in]{geometry}

\usepackage{verbatim}
\usepackage{color}
\usepackage{picture}
\usepackage{amsmath,amsthm,amsfonts,amssymb}
\usepackage{comment}
\usepackage{hyperref}
\hypersetup{colorlinks,linkcolor={blue},citecolor={blue},urlcolor={red}}

\newtheorem{thm}{Theorem}[section] 
\newtheorem{theorem}{Theorem}[section] 
 
\newtheorem{lemma}[thm]{Lemma}

\newtheorem{proposition}[thm]{Proposition} 
 
\newtheorem{defn}[thm]{Definition} 

		\newtheorem{remark}[thm]{Remark} 
\theoremstyle{remark}

\theoremstyle{definition}

\def\e{\epsilon}
\def\S{\Sigma} 
\def\n{\nabla}

\def\p{\partial}

\def\a{\alpha}

\def\n{\nabla}

\def\p{\partial}
\def\e{\epsilon}
\def\a{\alpha}

\def\g{\gamma}
\def\d{\delta}

\def\l{\lambda}

\def\n{\nabla}
\def\<{\langle}
\def\>{\rangle}
\def\div{{\rm div}}

\def\n{\nabla}

\def\p{\partial}
\def\e{\epsilon}
\def\a{\alpha}

\def\g{\gamma}
\def\d{\delta}
\def\l{\lambda}

\def\R{\mathbb{R}}

\def\ds{\displaystyle}

\def \vs{\vspace*{0.2cm}}

{\left\{\begin{array}{@{}l@{}}}{\end{array}\right.}
\patchcmd{\abstract}{\scshape\abstractname}{\textbf{\abstractname}}{}{}
\makeatletter 
\def\@makefnmark{} 
\makeatother

\numberwithin{equation}{section}
\numberwithin{exa}{section}

 \makeatletter
 \@namedef{subjclassname@2020}{\textup{2020} Mathematics Subject Classification}
 \makeatother
\begin{document}

\title[Log-Sobolev inequality]{ The Logarithmic Sobolev inequality on non-compact self-shrinkers
}

\author{Guofang Wang,  Chao Xia and Xiqiang Zhang}
\thanks{C.X. is supported by NSFC (Grant No. 12271449, 12126102) and the Natural Science Foundation of Fujian Province of China (Grant No. 2024J011008).}

\address{Mathematisches Institut, Albert-Ludwigs-Universit\"{a}t Freiburg, Freiburg im Breisgau, 79104, Germany}
\email{guofang.wang@math.uni-freiburg.de}

\address{School of Mathematical Sciences, Xiamen University, Xiamen, 361005, P. R. China}
\email{chaoxia@xmu.edu.cn}

\address{Mathematisches Institut, Albert-Ludwigs-Universit\"{a}t Freiburg, Freiburg im Breisgau, 79104, Germany}
\email{xiqiang.zhang@math.uni-freiburg.de}

\subjclass[2020]{Primary: 53A10 Secondary: 53C40,
	53A07, 35A23}

\keywords{ Log-Sobolev inequality, ABP method, self-shrinker}

\begin{abstract}

	In the paper we
	establish an optimal logarithmic Sobolev inequality for complete, non-compact, properly embedded self-shrinkers in the Euclidean space, which generalizes 
	a recent result of Brendle \cite{Brendle22}
	for closed self-shrinkers. 
	We first provide a  proof for the  logarithmic Sobolev inequality in the Euclidean space by using the Alexandrov-Bakelman-Pucci (ABP) method. 
	Then we use this approach to show 
	an optimal logarithmic Sobolev inequality for  complete, non-compact, properly embedded self-shrinkers in the Euclidean space, which is a sharp version of  the result of Ecker in \cite{Ecker}.
The proof is a noncompact modification of Brendle's proof for closed submanifolds and has a big potential to provide new inequalities in noncompact manifolds.
 

	\
	
	\noindent {\bf MSC 2020: }\\
	{\bf Keywords:}   Log-Sobolev inequality, ABP method, self-shrinker \\

\end{abstract}
 
 \maketitle

\section{Introduction}

Very recently, S. Brendle  \cite{Brendle21} used cleverly the ABP approach to establish optimal isoperimetric inequalities and also optimal Michael-Simon-Sobolev inequality for minimal submanifolds in
the Euclidean space, 
which solves a long-standing conjecture.  In \cite{Brendle22}, Brendle also obtained the following optimal logarithmic Sobolev inequality for closed (comapct without boundary) submanifolds in
the Euclidean space {of arbitrary dimension and codimension.

\

\noindent{\bf Theorem A} (\cite{Brendle22}) \textit{
    Let $\Sigma$ be an $n$-dimensional closed submanifold in $\R^{n+m}$. Then for any positive function $f$ on $\Sigma$ with $\int_\Sigma f dvol =1$, there holds 
\begin{equation}
    \label{eq0.1}
   \begin{array}{rcl} \ds\int_\Sigma \left\{\frac {|\n f|^2} f-f \log f +|H|^2 f\right\}  dvol \ge \ds\vs n+\frac n 2 \log(4\pi). 
    \end{array}    
\end{equation}  }

\

In particular, Theorem A implies an optimal  logarithmic Sobolev inequality for closed
self-shrinkers.

\

\noindent{\bf Theorem B} (\cite{Brendle22}) \textit{
    Let $\Sigma$ be an $n$-dimensional closed self-shrinker in $\R^{n+m}$.  Then
    for any positive function $\varphi$ with
    $\int_\S \varphi \, d\gamma=1$, where $d\gamma =\frac 1 {(4\pi)^\frac n 2} e^{-\frac {|x|^2} 4 }  dvol$, there holds
\begin{equation}\label{eq0.2}
     \int_\S \frac {|\n \varphi|^2}\varphi d\gamma
     \ge \int_\S \varphi \log\varphi \,d\gamma.
    \end{equation}
}

\

{A submanifold $\S\subset \R^{n+m}$ is called a \textit{ self-shrinker} if it satisfies 
$$H+\frac12x^\perp =0,$$ where $H$ is the mean curvature vector field of $\Sigma$ and $x^\perp$ is the normal component of $x$.} 

The logarithmic Sobolev inequality for submanifolds was first studied by Ecker in \cite{Ecker},
where he obtained a logarithmic Sobolev inequality with an extra term $c(n) \int \varphi \, d \gamma$.
See also the work of Gross  \cite{Gross91}, where the author established a logarithmic Sobolev inequality for submanifolds of certain probability spaces. 

One of our main results in this paper is to show that Theorem B is true for complete, non-compact properly embedded self-shrinkers.

\begin{theorem}\label{thm1}
	Let $\Sigma$ be an $n$-dimensional complete, non-compact properly embedded self-shrinker  in $\R^{n+m}$. Then for any positive function $\varphi$ on $\Sigma$ with $\int_\Sigma \varphi \,d \gamma =1$, there holds
	\begin{equation}
	\label{eq0.3}
	\int_\S \frac {|\n \varphi|^2}\varphi \,d\gamma \ge \int_\S \varphi \log\varphi \,d\gamma.
	\end{equation}
 Equivalently,   for any positive function $f$ on $\Sigma$ with $\int_\Sigma f dvol =1$, there holds
	\begin{equation}
	\label{eq0.4a}
	\begin{array}{rcl} \ds\int_\Sigma \left\{ \frac {|\n f|^2} f-f \log f +|H|^2 f\right\}dvol\ge \ds n+\frac n 2 \log(4\pi).
	\end{array}    
	\end{equation}
 
\end{theorem}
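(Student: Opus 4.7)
My approach is to adapt Brendle's ABP-based proof of Theorem~B to the noncompact setting, via an exhaustion $\Sigma_R := \Sigma \cap \bar B_R(0)$ and with Gaussian tail bounds that are specific to self-shrinkers. I first note that \eqref{eq0.3} and \eqref{eq0.4a} are equivalent: substituting $f = (4\pi)^{-n/2} e^{-|x|^2/4}\varphi$, so that $f\,dvol = \varphi\,d\gamma$, and using the self-shrinker equation $|H|^2 = \tfrac14 |x^\perp|^2$, the integrand on the left of \eqref{eq0.4a} transforms into that of \eqref{eq0.3} up to a pure divergence of the form $-\mathrm{div}_\gamma(\varphi\, x^T)$, with $x^T$ the tangential part of the position vector. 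Integrating over $\Sigma$, the boundary contribution at infinity vanishes because of the Gaussian weight in $d\gamma$ together with the polynomial Euclidean-volume bound for properly embedded self-shrinkers (Ding-Xin, Cheng-Zhou). So it suffices to prove \eqref{eq0.4a}.

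Next, by a standard density argument I may assume $f$ is smooth and strictly positive with $\int_\Sigma f\,dvol = 1$; properness of the embedding implies that $\Sigma_R$ is, for a.e.\ $R$, a compact manifold with smooth boundary. On each such $\Sigma_R$ I would mimic Brendle's construction and solve the Neumann problem
$$ \mathrm{div}_\Sigma(f\,\nabla u)\;=\;f\bigl(\log f - \lambda_R\bigr)\ \text{in }\Sigma_R,\qquad \langle\nabla u,\nu\rangle = 0\ \text{on }\partial\Sigma_R,$$
with $\lambda_R$ prescribed by the compatibility condition, and then introduce the transport map $\Phi(x,y) := \nabla^\Sigma u(x)+y$ for $x\in \Sigma_R$ and $y\in T_x^\perp\Sigma$. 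For each $\xi\in\mathbb R^{n+m}$, consider the minimum of $u(\cdot)-\langle\xi,\cdot\rangle$ on $\Sigma_R$: at an interior contact point the AM-GM inequality applied to the eigenvalues of $\nabla^2 u - \langle y, \mathrm{II}\rangle$ bounds the Jacobian of $\Phi$ by $\bigl(\tfrac1n(\Delta u-\langle y,H\rangle)\bigr)^n$. Integrating this pointwise bound against a suitable Gaussian weight in $\xi$ on $\mathbb R^{n+m}$, taking logarithms and applying Jensen's inequality with respect to the probability measure $f\,dvol$, I expect to recover
$$\int_{\Sigma_R}\Bigl\{\tfrac{|\nabla f|^2}{f}-f\log f+|H|^2 f\Bigr\}dvol \;\ge\; n+\tfrac n2\log(4\pi)-\varepsilon(R)$$
for an explicit error $\varepsilon(R)$, by essentially the same algebra that yields \eqref{eq0.1} in the closed case.

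The main obstacle is to show that $\varepsilon(R)\to 0$ as $R\to\infty$. The error has three sources: (a) the drift of $\lambda_R$ away from the global value $\int_\Sigma f\log f\,dvol$; (b) the image $\Phi(\Sigma_R\times\bar B^m)$ failing to cover all of $\mathbb R^{n+m}$; and (c) the boundary flux of $f\nabla u$ across $\partial\Sigma_R$, together with the possibility that a contact point falls on $\partial\Sigma_R$. The Neumann condition is chosen precisely to neutralise (c). For (a) and (b), the self-shrinker structure is essential: the identity $|H|^2 = \tfrac14 |x^\perp|^2$, combined with the finiteness of the Gaussian-weighted volume of $\Sigma$ (equal to the entropy of the shrinker, by Ding-Xin) and its exponential decay on $\Sigma\setminus\Sigma_R$, furnishes a uniform Gaussian majorant that forces the tail integrals to zero. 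Establishing the corresponding uniform-in-$R$ elliptic estimates on $u$ over fixed compact subsets, and verifying the asymptotic surjectivity of $\Phi$ up to a Gaussian-small set, is precisely where the ``noncompact modification'' of Brendle's scheme alluded to in the abstract lives, and is the main technical content of the proof.
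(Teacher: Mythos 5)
Your strategy --- exhaustion by $\Sigma_R = \Sigma\cap \bar B_R$ plus a Neumann problem on each truncation --- is genuinely different from the paper's, which avoids truncation entirely. The paper's central insight is the \emph{Observation}: the canonical pair $u_0 = \tfrac12|x|^2$, $f_0 = (4\pi)^{-n/2}e^{-|x|^2/4}$ solves exactly the Brendle-type equation on \emph{any} submanifold $\Sigma\subset\R^{n+m}$, provided one adds the extra term $\bigl|\tfrac12 x^\perp + H\bigr|^2 f$ to the right-hand side. They then perturb $f$ to $f_\epsilon = (f+\epsilon f_0)/(1+\epsilon)$, solve the \emph{modified} equation globally on $\Sigma$ by a vanishing-viscosity argument anchored on $(u_0,\a_0)$, and use a sub-quadratic-growth barrier (derived from the same $u_0$) to show the correction $w_\epsilon := u_\epsilon - u_0$ is $o(|x|^2)$. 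This superlinear growth of $u_\epsilon$ gives surjectivity of $\Phi$ on the \emph{whole} manifold, sidestepping any boundary issue. For self-shrinkers the extra term vanishes, so the result becomes sharp. Your plan contains none of these ingredients.

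Two concrete gaps in your proposal. First, the PDE you write,
\[
\div_\Sigma(f\nabla u)=f(\log f - \lambda_R),
\]
is missing the terms $-\tfrac{|\nabla f|^2}{f}-|H|^2 f$ that Brendle's equation requires. Without $-|\nabla\log f|^2$ on the right, the completion of the square $\nabla\log f\cdot\nabla u \le \tfrac14|\nabla u|^2 + |\nabla\log f|^2$ cannot be absorbed, and without $-|H|^2$ the term $-\tfrac14|2H+y|^2$ that makes the normal-fibre Gaussian integral converge never appears. The ABP estimate $\det D\Phi\le f\,e^{|\Phi|^2/4 - |2H+y|^2/4 +\a - n}$ simply does not follow from your equation; nor does any ``Jensen'' step --- Brendle's argument integrates the Jacobian bound against the ambient Gaussian via the area formula, not via Jensen.

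Second, and more seriously, the Neumann boundary condition does \emph{not} ``neutralise'' the possibility of contact points on $\partial\Sigma_R$, contrary to your claim under (c). The Neumann condition controls the boundary flux $\int_{\partial\Sigma_R}f\langle\nabla u,\nu\rangle$, which is needed to integrate the equation, but it does nothing for surjectivity: if the minimum of $u - \langle\xi,\cdot\rangle$ over $\bar\Sigma_R$ falls on $\partial\Sigma_R$, the first-order condition together with $\langle\nabla u,\nu\rangle = 0$ only tells you $\langle\xi,\nu\rangle\ge 0$, which cuts out a \emph{half-space} of directions $\xi$ --- not a Gaussian-small set --- for which no interior contact point exists and no Hessian bound is available. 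In Brendle's free-boundary variants this is handled because $\partial\Sigma$ lies on a specific convex constraint surface; here $\partial\Sigma_R$ is an artificial sphere cut, and there is no such structure. You would need uniform-in-$R$ gradient bounds for $u_R$ (strong enough to confine $\nabla u_R(\partial\Sigma_R)$ far out in $\R^{n+m}$) to make the lost set Gaussian-negligible, and you give no mechanism for this; indeed, this is exactly the difficulty the paper's global construction of $u_\epsilon$ with controlled growth is designed to circumvent. As written, your plan has a genuine hole at this step.

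One last point on the ``equivalence'' of \eqref{eq0.3} and \eqref{eq0.4a}: the paper does not claim the boundary contribution $\int_\Sigma \div_\Sigma(f x^T)\,dvol$ vanishes, only that its partial integrals $\int_{\Sigma\cap\partial B_R}f|x^T|\,dA$ are nonnegative, so that \eqref{eq0.4a} $\Rightarrow$ \eqref{eq0.3}. If you want the converse you do need the decay argument you sketch, which is fine, but the directions should not be conflated when only one is actually needed.
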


Since $\R^n$ is a self-shrinker, Theorem \ref{thm1} implies the Gaussian logarithmic Sobolev inequality in $\R^n$, due to A. J. Stam 
\cite{Stam59} in 1959 or
P. Federbush \cite{Federbush} in  1969,  often attributed to L. Gross. 
Gross showed in \cite{Gross75}  that 
the Logarithmic Sobolev inequality (short, LSI) is  equivalent to hypercontractivity of the semigroup of a diffusion process.
Since his work it	has been intensively studied and found rich connections to analysis, differential geometry, and probability (for surveys we refer to 
\cite{AB00, GZ03, BGL}).
For instance, the 	LSI  has played an important role in  stochastic analysis and also
played a crucial role in Perelman’s proof (2002) of the Poincaré
conjecture.

Inequality \eqref{eq0.3} is clearly optimal. In fact,  $\varphi=1$ achieves equality in \eqref{eq0.3} for $\S=\R^n$. 
We believe that equality in \eqref{eq0.3} is never achieved  on a  closed self-shrinker.
Actually we believe that the following question has an affirmative answer.

\

\noindent {\bf Question 1.} {\it Let  $\S^n\subset\R^{n+m}$ be a complete properly embedded  self-shrinker. If equality in \eqref{eq0.3} holds for some positive function $\varphi$, is $\S$ isometric to $\R^n$?}

\


There are many examples of compact or non-compact self shrinkers. The most important ones are the sphere and cylinders, ${\mathbb S}^k(\sqrt{2k})\times \R^{n-k}$, $k=0,1,2\cdots, n.$ Also most of known examples of non-compact self-shrinker are properly embedded. Here by a properly embedded submanifold it means that the  immersion $x:\Sigma\to \R^{n+m}$ is proper.  Thanks to a result of Ding-Xin \cite{Ding-Xin} and Cheng-Zhou \cite{Cheng-Zhou}, the properness of a self-shrinker is equivalent to the polynomial volume growth.

For a general complete, non-compact submanifolds in the Euclidean space we obtain a slightly weaker inequality than \eqref{eq0.1} in Theorem A.
\begin{theorem}\label{thm2}
	Let $\Sigma$ be an $n$-dimensional complete, non-compact submanifold in $\R^{n+m}$ with a  polynomial 
	 growth  condition (see Definition \ref{Def1} below). 
  Then for any positive function $f$ on $\Sigma$ with $\int_\Sigma f dvol =1$, there holds
	\begin{equation}
	\label{eq0.4}
	\begin{array}{rcl} &&\ds\int_\Sigma \left\{ \frac {|\n f|^2} f-f \log f +|H|^2 f\right\}dvol\\&\ge& \ds\vs n+\frac n 2 \log(4\pi)  \ds +\int_\Sigma f \left|\frac 12 x^\perp + H\right|^2 dvol -\log \int_{\Sigma}f (x) e^{\left|\frac 12 x^\perp + H\right|^2} dvol.
	\end{array}    
	\end{equation}
\end{theorem}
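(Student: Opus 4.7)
The plan is to adapt Brendle's ABP approach from \cite{Brendle22} to the non-compact setting by means of an exhaustion argument, paying attention to the extra terms that appear because the self-shrinker relation $H + \tfrac12 x^\perp = 0$ is unavailable to cancel them. Observe first that, by Jensen's inequality applied to $\log$ with the probability measure $f\,dvol$, the combination $\int_\S f|\tfrac12 x^\perp + H|^2\,dvol - \log\int_\S f\,e^{|\tfrac12 x^\perp + H|^2}\,dvol$ on the right-hand side of \eqref{eq0.4} is nonpositive and vanishes exactly when $|\tfrac12 x^\perp + H|$ is $f\,dvol$-a.s.\ constant (in particular, on self-shrinkers), which is consistent with the fact that Theorem \ref{thm1} is the stronger inequality in that case.

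By density I would assume $f$ is smooth and compactly supported in some ball $B_R(0)$, and work on the compact domain $\Sigma_R := \S \cap \overline{B_R(0)}$. Following Brendle, I would solve a weighted Neumann problem on $\Sigma_R$ for a scalar potential $u$,
\begin{equation*}
\mathrm{div}_\S(f\,\n^\S u) \;=\; f\bigl(\log f + |H|^2 + |\tfrac12 x^\perp + H|^2 - c_R\bigr)\ \text{on }\Sigma_R,\qquad \langle \n^\S u,\eta\rangle = 0\ \text{on }\partial\Sigma_R,
\end{equation*}
with $c_R$ determined by Fredholm solvability: testing against $1$ and using $\int_\S f\,dvol = 1$ yields $c_R = \int_{\Sigma_R} f(\log f + |H|^2 + |\tfrac12 x^\perp + H|^2)\,dvol$, so that $c_R$ is precisely the combination appearing as the main term on the right of \eqref{eq0.4}. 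The polynomial growth hypothesis (together with $\supp f \subset B_R$) guarantees that all integrals converge absolutely and that standard elliptic theory produces a smooth solution up to the boundary.

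Next, introduce the Brendle transport map on the normal bundle of $\Sigma_R$,
\begin{equation*}
\Phi(x,y) \;=\; \n^\S u(x) + y - \tfrac12 x^\perp(x),\qquad x\in\Sigma_R,\ y\in T_x^\perp\S,
\end{equation*}
where the shift by $-\tfrac12 x^\perp$ is dictated by the ambient Gaussian weight $(4\pi)^{-(n+m)/2}e^{-|\cdot|^2/4}$ against which we intend to integrate. A contact-set argument in the spirit of \cite{Brendle21,Brendle22} shows that, on the set $A$ where the relevant one-sided Hessian bound holds, $\Phi(A)$ covers $\R^{n+m}$ up to Lebesgue-null. Applying the area formula to $\Phi$ against the ambient Gaussian, invoking the AM-GM inequality $\log\det D\Phi \le \mathrm{tr}(D\Phi - I)$ on the Jacobian, and using the PDE to substitute for the Laplacian term $\Delta_\S u$, one obtains an estimate of the form $1 \le (4\pi)^{-n/2}e^{-n-c_R}\int_{\Sigma_R} f\,e^{|\tfrac12 x^\perp + H|^2}\,dvol$ after an integration by parts that converts $\int f\,\Delta_\S u$ into $-\int\langle\n f,\n u\rangle$; completion of squares in the resulting quadratic in $\n u$ is what brings in the $\int|\n f|^2/f$ term absorbed by $c_R$ upon rearrangement. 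Taking logarithms and substituting the explicit value of $c_R$ yields \eqref{eq0.4} on $\Sigma_R$; since $\supp f\subset B_R$, the inequality is already independent of $R$, so no subsequent limit is required.

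The main obstacle will be the third step: carrying out the contact-set coverage argument in the presence of the boundary $\partial\Sigma_R$ and the additional Gaussian drift, and tracking the Jacobian computation carefully so that the $|\n f|^2/f$ term and the constant $n + \tfrac{n}{2}\log(4\pi)$ come out with their correct coefficients. The polynomial growth hypothesis is essential for integrability in the area formula and for constructing suitable barriers near $\partial\Sigma_R$; a final density argument removes the smoothness and compact-support assumptions on $f$.
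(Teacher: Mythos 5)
Your exhaustion-by-bounded-domains scheme does not match the paper's proof and, more importantly, it has a gap at exactly the point the paper's approach is designed to circumvent.

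\textbf{The contact-set argument fails on $\Sigma_R$ with Neumann data.} You solve a Neumann problem on $\Sigma_R = \Sigma\cap\overline{B_R}$ and then assert that a contact-set argument ``shows $\Phi(A)$ covers $\R^{n+m}$ up to Lebesgue-null.'' In Brendle's closed case, surjectivity comes from the fact that for each $\xi\in\R^{n+m}$ the tangential component of $q(x)=u(x)-\langle x,\xi\rangle$ attains a global minimum at an interior point (since there is no boundary), forcing $\nabla u(\bar x)=\xi^{\top}$ and $\nabla^2 u(\bar x)\ge 0$. On $\Sigma_R$ that minimum can sit on $\partial\Sigma_R$, and the Neumann condition $\langle\nabla u,\eta\rangle=0$ does not yield $\nabla u(\bar x)=\xi^{\top}$ there unless one has a convexity condition on $\partial\Sigma_R$ (this is precisely the geometric input in Brendle's ABP proof for domains with boundary in \cite{Brendle21}; it is absent here because $\Sigma\cap\partial B_R$ has no favorable convexity). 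This is the real obstruction to an exhaustion argument, and the paper avoids it entirely: it does not truncate $\Sigma$ but instead solves the PDE \emph{globally} on $\Sigma$ for the perturbed density $f_\e=(f+\e f_0)/(1+\e)$, using the canonical pair $(f_0,u_0)=\bigl((4\pi)^{-n/2}e^{-|x|^2/4},\ \tfrac12|x|^2\bigr)$ to reduce to a bounded source term, a Perron/vanishing-viscosity argument to produce a solution, and a quadratic growth bound which yields superlinear growth of $u_\e$ and hence interior contact points for every $\xi$. A separate lemma (Lemma \ref{lem2.4}/\ref{lem3.1a}) establishes $\int_\Sigma\div(f_\e\nabla u_\e)=0$, the substitute for the no-boundary divergence theorem.

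\textbf{Your PDE is not the right one and cannot support the pointwise Jacobian bound.} The paper's equation \eqref{eq5.2} is
\[
\div(f\nabla u)=f\log f-\frac{|\nabla f|^2}{f}+\Bigl|\tfrac12 x^\perp+H\Bigr|^2 f-|H|^2 f+\alpha f,
\]
whereas you propose $\div(f\nabla u)=f\bigl(\log f+|H|^2+|\tfrac12 x^\perp+H|^2-c_R\bigr)$: the $-|\nabla f|^2/f$ term is absent and the sign of $|H|^2$ is reversed. Both are fatal. The ABP step is a \emph{pointwise} estimate $\det(\nabla^2 u-\langle II,y\rangle)\le e^{\Delta u-\langle H,y\rangle-n}$, and when one substitutes $\Delta u = \log f-|\nabla\log f|^2-\nabla\log f\cdot\nabla u -|H|^2+\cdots$ the term $-|\nabla\log f|^2-\nabla\log f\cdot\nabla u\le \tfrac14|\nabla u|^2$ is what makes the cross term controllable, and $-|H|^2-\langle H,y\rangle+\tfrac14|y|^2=-\tfrac14|2H+y|^2+\tfrac12|y|^2-\ldots$ is what produces the Gaussian in $y$ to integrate over the normal fiber. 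Your sketch tries to recover $|\nabla f|^2/f$ \emph{after} the area formula by integrating $\int f\Delta u$ by parts and completing squares, but that is not where the term enters: it has to sit in the PDE so that the pointwise Jacobian bound holds. With your PDE the cross term $-\nabla\log f\cdot\nabla u$ produces an uncontrolled $+|\nabla\log f|^2$ in the exponent.

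\textbf{The shift in $\Phi$ is unnecessary and unhelpful.} The map used in the paper, as in Brendle, is $\Phi(x,y)=\nabla u(x)+y$ with $y\in T_x^\perp\Sigma$; then $|\Phi|^2=|\nabla u|^2+|y|^2$ and $\det D\Phi=\det(\nabla^2u-\langle II,y\rangle)$. Subtracting $\tfrac12 x^\perp$ breaks the clean orthogonal splitting $|\Phi|^2=|\nabla u|^2+|y|^2$ (since $\nabla u(x)$ is tangential and $y-\tfrac12 x^\perp$ is normal, one could still split, but there is no gain), and more importantly it changes the Jacobian: $D(x^\perp)$ has nontrivial tangential components via the shape operator, destroying the identity $\det D\Phi=\det(\nabla^2 u-\langle II,y\rangle)$ that the whole argument rests on. The Gaussian you want to hit in $\R^{n+m}$ appears by design without any shift.

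In short, the correct route is the one in the paper: prove global solvability of \eqref{eq5.2} for $f_\e$ using the Observation and a $\delta$-regularized Perron argument, obtain sub-quadratic growth of $u_\e-u_0$, check $\int_\Sigma\div(f_\e\nabla u_\e)=0$, run Brendle's ABP estimate verbatim on all of $\Sigma$, and let $\e\to 0$.
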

Theorem \ref{thm1} follows clearly from  Theorem \ref{thm2}.

{ To indicate the difference between the compact case and the non-compact case, let us first  briefly recall the  ABP method of Brendle to achieve \eqref{eq0.1}. Under the normalization 
$\int_\Sigma f=1$, one considers 
\begin{equation}\label{ABP-equation}
    \div(f\nabla u)=f \log f -\frac {|\n f|^2} f -|H|^2 f +\a f.
\end{equation}
(Here we have used a different normalization and its corresponding equation other than the one in \cite{Brendle22} in our convenience. But there is no any difference.)
 Using the map  $$\Phi(x,y)=\n u(x)+y: 
 \{(x,y) \, |\, x\in \Sigma, y\in T_x^\perp \Sigma, \n ^ 2 u(x)-\<II(x), y\> \ge 0\} \to \R^ {n+m}
 $$ and the area formula, he obtained 
 $$ \a \ge n+\frac n 2 \log(4\pi).$$ It follows, together with \eqref{ABP-equation} and the divergence theorem,
 inequality \eqref{eq0.1}. The closedness of $\Sigma$ is used in the proof of the existence of \eqref{ABP-equation}, the surjectivity of $\Phi$ and the divergence theorem, $\int_\S \div (f \n u) =0.$ In other words, for a non-compact case we need to take care of these 3 points. %
 
 We are not able to prove the existence of solutions to \eqref{ABP-equation}, when $\Sigma$ is non-compact. But we are able to prove the existence
 of solutions to  the following equation 
\begin{equation}\label{eq0.6}
\div(f\nabla u) = f\log f
	-\frac {|\n f|^2}{f}-|H|^2 f+ \left|\frac 12 x^\perp + H\right|^2f  +\a f
\end{equation}
 for suitable functions $f$ and under a polynomial  growth condition. This will be shown enough to establish our inequalities.
 Our approach will crucially rely on an existence of  a {\it canonical pair} $f_0$ and $u_0$ below.



\medskip

\noindent{\bf Observation}. {\it On any $n$-dimensional submanifold $\Sigma\subset\R^{n+m}$, 
	the function $u_0=\frac 12 |x|^2 $ satisfies
	\begin{equation}
	\label{eq0.5}
	\div(f_0\nabla u_0) =
	f_0 \log f_0 - \frac {|\n f_0|^2}{f_0}   + \left|\frac 12 x^\perp  
 + H\right|^2f_0 -|H|^2f_0 +\a_0f_0,
	\end{equation}
 where $f_0=\frac 1 {(4\pi)^\frac n 2} e^{-\frac {|x|^2} 4 } $ and $\a_0=n+\frac n 2 \log (4\pi).$
}

\

In view of \eqref{eq0.5}, we consider \eqref{eq0.6} 
instead of  equation \eqref{ABP-equation}. 
It is easy to show that in order to show  inequality \eqref{eq0.4} we only need to consider functions with compact support. Let $f$ be such a function.
We perturb $f$ by using $f_0$  $$f_\e=\frac{f+\e f_0}{1+\e}, \quad \e>0.$$ 
Then we prove there exists a solution $u_\e$ to \eqref{eq0.6} on $\S$ for some $\a_\e$, provided that $\S$ satisfies the polynomial  growth condition.
Moreover, We show that the solution $u_\e$ is of sub-linear growth and satisfying $\div(f_\e \n u_\e)=0.$
The sub-linear growth implies the surjectivity of $\Phi$. Such $u_\e$
enables us to apply the ABP method to achieve Theorem \ref{thm2}.
}

We remark that with \eqref{eq0.6} we can only obtain \eqref{eq0.4}, instead of \eqref{eq0.1}.  It is clear, when $\Sigma$ is self-shrinker, which just means that the extra  term 
$\left|\frac 12 x^\perp + H\right|^2$ vanishes, we have the optimal form \eqref{eq0.3}
for properly embedded self shrinkers, together with  the fact that properly embeddedness of 
a self-shrinker is equivalent to the polynomial volume growth, by \cite{Ding-Xin} and \cite{Cheng-Zhou}. It  remains open 
if Theorem \ref{thm1} is true for any non-compact submanifolds under the polynomial  growth condition.

We make a few comments on the proof of the logarithmic Sobolev inequality in the Euclidean space.  There are at least  \textit{``More than fifteen proofs of the 	logarithmic Sobolev inequality''} written by Ledeux as a title of his survey \cite{Ledeux_15}, where the author divides the proofs into two groups:  one is the proofs for hypercontractivity, and another is for the proofs of the LSI. 
Here we just mention more analytic methods in  the second group:  Deduction of the LSI 
from the classical Euclidean	isoperimetric inequality or the Gaussian isoperimetric inequality  by the coarea formula \cite{Rothaus85} and \cite{Ledeux85}, deduction from the optimal Sobolev inequality  \cite{Beckner99} and \cite{DPD02, DPD03},  a proof using  the Prekopa-Leindler inequality which is  a functional form of the Brunn-Minkowski inequality and an approach of optimal transport
\cite{Otto-Villani00} and \cite{Cordero-Erausquin02}. A possible simplest proof  suggested in \cite{Ledeux85} is a semi-group theoretical proof of D. Bakry and M. Émery in \cite{Bakry-Emery85} by studying the second derivative of the entropy along the Ornstein-Uhlenbeck semigroup. Our result provides a new proof of it.

Our approach is a non-compact modification of the ABP method of Brendle \cite{Brendle22}. {To the best of our knowledge, this is the first attempt to apply the ABP method in the non-compact setting}.   There are many generalizations of his result to closed submanifolds in other ambient spaces, see \cite{Pham23, Dong24, Lee-Ricc24, Yi-Zheng24}. Also the ABP method is successful to be used to obtain
the optimal isoperimetric inequalities for closed domains in  minimal submanifolds
in \cite{Brendle21} as mentioned at the beginning of the Introduction. See also the previous work of Cabr\'e and  Cabr\'e-Ros-Oton-Serra \cite{Cabre08, CRS16}, and also Trudinger \cite{Trudinger94}. The ABP method for isoperimetric inequalities can be applied to  cases with free boundary or even capillary boundary, see \cite{LWW} and \cite{Fusco}

It sounds that the approach presented in the paper is more flexible and powerful. It provides not only also a proof for the optimal Euclidean  $L^p$-log-Sobolev inequality ($p>1$) 
and but also establish  a new Logarithmic Sobolev inequality for positive definite symmetric 2-tensors on the Euclidean space, which could  be viewed as a non-commutative version of the LSI. 
The key point is the existence of such a canonical pair as in the above Observation.
We will  carry them out in a forthcoming paper.

In the paper, all functions $f$ or $\varphi$ are non-negative with finite integrals 
integrated over the support of $f$ or $\varphi.$

\

The paper is organized as follows: In Section 2 we prove the existence of a solution of \eqref{eq0.6} for $f_\e$ in the case of  $\Sigma =\R^n$ and then provide 
an ABP proof for the classical logarithmic Sobolev inequality \eqref{eq1.1} in the Euclidean space. 
In Section 3 we modify the ideas presented in Section 2  to prove Theorem \ref{thm1} and Theorem \ref{thm2}. Some discussions related to the entropy introduced in \cite{Colding-Minicozzi}
are given in the end of the paper.

\section{A proof of the classical LSI}

In order to illustrate our ideas, we consider first  the case of the Euclidean space.
Namely in this section we will   provide a new proof of the classical logarithmic Sobolev inequality

\begin{equation}\label{eq1.1}
\int _{\R^n} \frac {|\n f|^2} fdx-
\int_ {\R^n} f \log f dx \ge  n +\frac  n2 \log (4\pi) 
\end{equation}
for any non-negative function $f$ with normalization that $\int_{\R^n } f dx=1$, which is equivalent to 
the Gaussian logarithmic Sobolev inequality
\begin{equation} \label{eq1.2} 
\int_{ \R^n } \frac {|\n \varphi|^2} \varphi \,d\gamma \ge \int_{\R^n} \varphi \log \varphi \, d \gamma ,
\end{equation} for any non-negative function $\varphi$ with $ \int_{\R^n} \varphi \, d\gamma=1$,
where   $d\gamma = d\gamma_n=\gamma dx$ be the  Gaussian probability measure on $\R^n$ with density $\gamma= \gamma_n=\frac 1{(4\pi)^{\frac n 2}} e^{-\frac {|x|^2 } 4}$. 
It is easy to check that if we insert $f=\varphi\gamma $ into \eqref{eq1.1}, then we have \eqref{eq1.2}.

For a positive function $f$ with normalization $\int_{ \R^n }f dx=1$, we want to find a pair of function $u$ and number $\a$ satisfying 
\begin{equation}
\label{P-1}
\div (f \nabla u)=f\log f-\frac{|\nabla f|^2}{f} +\alpha f
\end{equation}
with a superlinear growth condition
\begin{equation}
\label{P-2}
\frac{u(x)}{|x|}\to \infty, \qquad \hbox{ as } |x|\to \infty
\end{equation}
and  
\begin{equation}
\label{P-4}
\int_{\mathbb R^n} \div (f\nabla u)\,dx=0.
\end{equation} 
If we have a solution $(u,\a)$ of  \eqref{P-1} satisfying \eqref{P-2}, then we can follow exactly  the idea of Brendle to show
$$\a \ge n +\frac n 2 \log (4\pi).$$ See below. If \eqref{P-4} is also true, then we have
$$\int_{ \R^n } \left\{\frac{|\nabla f|^2}{f} -f\log f \right\} \, dx = \a\int_{ \R^n }f =\a \ge n +\frac n 2 \log (4\pi),$$
the logarithmic Sobolev inequality. However we are unable to show the existence of $(u,\a)$ to  \eqref{P-1}--\eqref{P-4} for {\it all } non-negative $f$. In fact we doubt that this is true for all $f$.

It is easy to see  by exhaustion that in order to show \eqref{eq1.1} for all non-negative (or positive) functions we only need to show it for all non-negative functions with compact support.
Let $f$ be a non-negative function with compact support $K$. For such a $f$  we are still 
unable to show the existence of $(u,\a)$ with \eqref{P-1}--\eqref{P-4}. The crucial observation is a simple fact that  there is a solution $(u_0,\a_0)$  of  \eqref{P-1}--\eqref{P-4} for $f_0=\gamma$
\begin{equation}
u_0=\frac 12|x|^2, \qquad \a_0=n+\frac n 2 \log (4\pi).
\end{equation}

\begin{lemma}
	\label{L-1}
	For $f_0=\frac{1}{(4\pi)^{n/2}}e^{-\frac{|x|^2}{4}}$ and $u_0=\frac{|x|^2}{2}$, we have
	\begin{equation}
	\label{L-2}
	\div(f_0\nabla u_0) = f_0 \log f_0-
	\frac {|\n f_0|^2}{f_0} +\a_0f_0,
	\end{equation}
	with
	$
	\alpha_0=n+\frac{n}{2}\log(4\pi).$ Moreover \eqref{P-2} and \eqref{P-4} are satisfied.
\end{lemma}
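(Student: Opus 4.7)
The plan is to verify the identity by direct computation, since both $f_0$ and $u_0$ are explicit and all quantities in \eqref{L-2} can be evaluated in closed form. There is no real obstacle here; the lemma serves to pin down the value of $\alpha_0$ and to exhibit the canonical pair that drives the whole scheme, so the proof is just careful bookkeeping of the exponentials.

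First I would compute the left-hand side. From $u_0=\tfrac12|x|^2$ one has $\nabla u_0 = x$, so
\[
\div(f_0\nabla u_0)=\nabla f_0\cdot x + n f_0.
\]
Since $f_0 = (4\pi)^{-n/2}e^{-|x|^2/4}$ we get $\nabla f_0 = -\tfrac{x}{2}f_0$, hence $\nabla f_0\cdot x = -\tfrac{|x|^2}{2}f_0$ and the left-hand side equals $\bigl(n-\tfrac{|x|^2}{2}\bigr)f_0$.

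Next I would compute the right-hand side. Directly,
\[
f_0\log f_0 = \Bigl(-\tfrac{n}{2}\log(4\pi)-\tfrac{|x|^2}{4}\Bigr)f_0,\qquad
\frac{|\nabla f_0|^2}{f_0}=\tfrac{|x|^2}{4}f_0,
\]
so the combination $f_0\log f_0 - \tfrac{|\nabla f_0|^2}{f_0}+\alpha_0 f_0$ equals $\bigl(\alpha_0-\tfrac{n}{2}\log(4\pi)-\tfrac{|x|^2}{2}\bigr)f_0$. Matching with the LHS gives the identity \eqref{L-2} precisely when $\alpha_0 = n+\tfrac{n}{2}\log(4\pi)$.

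Finally I would verify \eqref{P-2} and \eqref{P-4}. The first is immediate since $u_0(x)/|x|=|x|/2\to\infty$. For the second, using the identity just established,
\[
\int_{\mathbb{R}^n}\div(f_0\nabla u_0)\,dx = \int_{\mathbb{R}^n}\Bigl(n-\tfrac{|x|^2}{2}\Bigr)f_0\,dx,
\]
and since $f_0$ is the density of the centered Gaussian on $\mathbb{R}^n$ with covariance $2I_n$, we have $\int f_0\,dx=1$ and $\int |x|^2 f_0\,dx = 2n$, so the integral vanishes. This completes the verification; the only point to be careful about is the normalization of the Gaussian, which fixes the constant $\tfrac{n}{2}\log(4\pi)$ appearing in $\alpha_0$.
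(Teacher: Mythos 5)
Your proof is correct and is exactly the direct calculation the paper alludes to (the paper's proof is simply ``Just a direct calculation''); you have spelled it out and the bookkeeping, including the variance computation $\int|x|^2 f_0\,dx=2n$ for \eqref{P-4}, checks out.
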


\begin{proof}
	Just a  direct calculation.
\end{proof}

Using $f_0$ we consider the following perturbation of $f$
\begin{equation}
f_\e =\frac 1 {1+\e} (f+\e f_0),  \qquad \forall \e>0.
\end{equation}
For  these $f_\e$ we now can  show the existence.
\begin{proposition} \label{prop1} 
	Let $f$ be a non-negative function with compact support $K$. Then for any $\e>0$ there is a solution $(u_\e, \a_\e)$ of \eqref{P-1}--\eqref{P-4} for $f_\e$.
\end{proposition}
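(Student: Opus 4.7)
The plan is to first pin down $\alpha_\epsilon$ from the compatibility condition \eqref{P-4}, and then exploit the canonical pair $(u_0,\alpha_0)$ of Lemma \ref{L-1} to reduce \eqref{P-1} for $f_\epsilon$ to a weighted Poisson equation for the correction $w_\epsilon:=u_\epsilon-u_0$ whose right-hand side has Gaussian decay and vanishing integral. Integrating \eqref{P-1} for $f_\epsilon$ and using $\int_{\mathbb{R}^n}f_\epsilon\,dx=1$ together with \eqref{P-4} forces
\[
\alpha_\epsilon = \int_{\mathbb{R}^n}\left[\frac{|\nabla f_\epsilon|^2}{f_\epsilon}-f_\epsilon\log f_\epsilon\right]dx,
\]
which is a finite real number because $f_\epsilon\geq \tfrac{\epsilon}{1+\epsilon}f_0>0$ everywhere, and both integrands agree with a constant multiple of $|\nabla f_0|^2/f_0-f_0\log f_0$ outside the compact support $K$ of $f$.

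Writing $u_\epsilon=u_0+w_\epsilon$ with $u_0=\tfrac12|x|^2$ and subtracting $\div(f_\epsilon\nabla u_0)$ from both sides of \eqref{P-1}, the problem becomes
\[
\div(f_\epsilon \nabla w_\epsilon)=G_\epsilon, \qquad G_\epsilon:=f_\epsilon\log f_\epsilon-\frac{|\nabla f_\epsilon|^2}{f_\epsilon}+\alpha_\epsilon f_\epsilon-\div(f_\epsilon\nabla u_0).
\]
A direct calculation using Lemma \ref{L-1} scaled by $c:=\tfrac{\epsilon}{1+\epsilon}$ gives $G_\epsilon = c(\log c+\alpha_\epsilon-\alpha_0)f_0$ on $\mathbb{R}^n\setminus K$, so $G_\epsilon$ is bounded on $\mathbb{R}^n$ with Gaussian decay. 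Moreover $\int G_\epsilon\,dx=0$ follows from the choice of $\alpha_\epsilon$ combined with $\int \div(f_\epsilon x)\,dx=0$ (which itself is a consequence of the Gaussian decay of $f_\epsilon$ at infinity).

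To produce $w_\epsilon$, I would apply a standard variational argument on the weighted Hilbert space $H:=\{w\in H^1_{\rm loc}(\mathbb{R}^n):\int f_\epsilon|\nabla w|^2<\infty,\ \int f_\epsilon w=0\}$, minimizing $J(w)=\tfrac12\int f_\epsilon|\nabla w|^2\,dx+\int G_\epsilon w\,dx$. The Gaussian Poincar\'e inequality for $f_0$, combined with the comparability $f_\epsilon\asymp f_0$ outside $K$ and the strict positivity of $f_\epsilon$ on $K$, yields a Poincar\'e inequality for the measure $f_\epsilon\,dx$, and hence coercivity of the Dirichlet form. The bound $\int G_\epsilon^2/f_\epsilon\,dx<\infty$ is immediate from the explicit description of $G_\epsilon$, giving continuity of the linear term. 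Lax--Milgram produces a unique minimizer $w_\epsilon$; since $f_\epsilon>0$ is smooth, elliptic regularity upgrades $w_\epsilon$ to a classical $C^\infty$ solution.

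The main obstacle I anticipate is verifying \eqref{P-2}. Since $u_0/|x|\to\infty$, it is enough to show that $w_\epsilon(x)=o(|x|^2)$ at infinity. Outside $K$, $w_\epsilon$ satisfies the Ornstein--Uhlenbeck equation $Lw_\epsilon=\mu_\epsilon$ with constant right-hand side $\mu_\epsilon:=\log c+\alpha_\epsilon-\alpha_0$. Integrating the radial ODE $\psi'+\bigl(\tfrac{n-1}{r}-\tfrac{r}{2}\bigr)\psi=\mu_\epsilon$ with the decaying constant of integration yields a radial particular solution of at most logarithmic growth, $\phi(r)\sim-2\mu_\epsilon\log r$. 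Combining this with the $L^2(f_\epsilon\,dx)$-bound on $w_\epsilon$ coming from the variational step, Hermite-expansion control of the non-radial modes on exterior domains, and interior elliptic estimates produces the desired sub-quadratic (in fact logarithmic) pointwise growth of $w_\epsilon$. Property \eqref{P-4} is then automatic from $\int G_\epsilon=0$ together with the Gaussian decay of $f_\epsilon\nabla u_\epsilon$.
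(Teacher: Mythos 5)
Your existence argument via Lax--Milgram on the weighted space $H$ is a genuine alternative to the paper's approach and looks sound: the Gaussian Poincar\'e inequality does transfer to $f_\epsilon\,dx$ (since $f_\epsilon\asymp f_0$ globally with $\epsilon$-dependent constants), $\int G_\epsilon^2/f_\epsilon\,dx<\infty$ is correct, and $\int G_\epsilon\,dx=0$ ensures the weak solution in $H$ solves $\div(f_\epsilon\nabla w_\epsilon)=G_\epsilon$ against \emph{all} test functions, not only mean-zero ones. The paper instead runs a vanishing-viscosity/ergodic scheme (solve $\delta w_\delta-(Lw_\delta+\ell)=0$, prove uniform quadratic bounds by a barrier comparison, and let $\delta\to 0$), which is superficially more complicated but, crucially, produces the growth bound and the existence \emph{simultaneously}.

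The real gap is in your treatment of \eqref{P-2}. You argue that $L^2(f_\epsilon\,dx)$-membership plus ``Hermite-expansion control of the non-radial modes'' gives $w_\epsilon=o(|x|^2)$, but $L^2(\gamma)$-membership places essentially no restriction on pointwise growth: Hermite polynomials of arbitrary degree $k$ lie in $L^2(\gamma)$ and grow like $|x|^k$, and on an exterior domain $\{|x|>R\}$ every spherical-harmonic mode $k\ge 0$ admits a homogeneous solution of $Lv=0$ that is in $L^2(\gamma)$ and grows like $r^k$. So after subtracting your radial particular solution $\phi\sim -2\mu_\epsilon\log r$, the remainder $v=w_\epsilon-\phi$ could, a priori, be an $L^2(\gamma)$-superposition of arbitrarily high-degree polynomial branches, and nothing in your argument rules out quadratic or faster pointwise growth. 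Interior elliptic estimates convert an $L^2$ bound on $B_2(x_0)$ into an $L^\infty$ bound on $B_1(x_0)$, but the local $L^2$ norm at $|x_0|=R$ coming from $\int f_\epsilon w_\epsilon^2<\infty$ only gives $\|w_\epsilon\|_{L^2(B_2(x_0))}\lesssim e^{R^2/8}$ --- far too weak. This is precisely the difficulty the paper's barrier argument is designed to dissolve: the $\delta$-term in $\delta w_\delta-Lw_\delta=\ell$ makes each approximant bounded, which lets the comparison with $v=h\,u_0+\max_{|y|\le R_h}w_\delta$ be applied on the unbounded exterior (one can legitimately use ``$\liminf_{|x|\to\infty}(v-w_\delta)\ge 0$''), and the resulting bound \eqref{P-12} is uniform in $\delta$, hence survives the limit. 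Without some substitute for this barrier step --- e.g. a maximum-principle argument on the exterior using a known bound at infinity, which you do not have --- the growth assertion is unproved. Your verification of \eqref{P-4}, by contrast, is fine once the growth bound is secured, and matches the paper's Lemma \ref{lem2.4}.
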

For $f_\e$ we then can follow idea of Brendle to prove $\a_\e\ge n+\frac n 2  \log(4\pi)$, which implies the LSI for $f_\e$. By taking $\e\to 0$ we obtain the LSI for any non-negative function $f$ with compact support, and hence for any non-negative functions on $\R^n$. This provides a new proof of inequality \eqref{eq1.1}.

\

\noindent{\it Proof of Proposition \ref{prop1}.}
Now, we start to prove  Proposition \ref{prop1}. We aim 
to find a solution $(u_\e, \a_\e)$ of \eqref{P-1}, that is,
\begin{equation}
\label{P-5}
\Delta u_\epsilon+\nabla \log f_\epsilon \cdot \nabla u_\epsilon  =\log f_\epsilon-|\nabla\log f_\epsilon|^2+\alpha_\epsilon.
\end{equation} 
with properties \eqref{P-2} and \eqref{P-4}. It is easy to check that it can reduced to find a solution
$(w_\e, c_\e)$ to 
\begin{equation}
\label{P-6}
\Delta w+\nabla \log g \cdot \nabla w+\ell(x)=c ~~~ \text{in} ~~\mathbb R^n     
\end{equation}
where $g=f_\e$ and 
\begin{equation}
\ell(x):=
|\nabla \log f_\epsilon|^2-|\nabla \log f_0|^2+\nabla \log \frac{f_\epsilon}{f_0}\cdot x+\log \frac{f_0}{f_\epsilon}
\end{equation}
with sub-quadratic growth
\begin{equation}
\label{P-10}
\lim_{|x|\to +\infty}\frac{w_\e(x)}{|x|^2}=0.
\end{equation}
In fact, with such a solution $(w_\e, c_\e)$ we have a solution of $u_\e=u_0+w_\e$ of \eqref{P-1} with $\a_\e=\a_0+c_\e$. \eqref{P-10}  implies the crucial property \eqref{P-2}. The property \eqref{P-4} can be also proved not  difficultly.

\begin{proposition}
	\label{prop2} 
	\eqref{P-6} has a solution $(w_\e, c_\e)$ with \eqref{P-10}.
\end{proposition}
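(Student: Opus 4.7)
The plan is to exploit the structural observation that $f_\e$ equals a constant multiple of the Gaussian $f_0$ outside the compact support $K$ of $f$. This makes $\ell(x)$ constant (equal to $\log\frac{1+\e}{\e}$) outside $K$, and smooth and bounded on $K$, so $\ell \in L^\infty(\R^n)$; moreover $\frac{\e}{1+\e}\, f_0 \leq f_\e \leq C_K\, f_0$ globally, for some $C_K$ depending only on $K$, $f$, and $\e$. Rewriting \eqref{P-6} in divergence form as $\div(f_\e\, \n w) = (c - \ell)\, f_\e$ and integrating over $\R^n$, the compatibility condition together with $\int f_\e\,dx = 1$ uniquely forces
\[c_\e := \int_{\R^n} \ell(x)\, f_\e(x)\,dx,\]
which is finite since $\ell$ is bounded; this is the only viable choice of the constant.

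For existence, I would use the direct method of the calculus of variations applied to the strictly convex functional
\[F(w) := \int_{\R^n} \Big(\tfrac12 |\n w|^2 - (\ell - c_\e)\, w\Big) f_\e\,dx\]
on the Hilbert space $\{w \in H^1(f_\e\,dx) : \int_{\R^n} w\, f_\e\,dx = 0\}$. The weighted Poincaré inequality, which gives coercivity modulo constants, transfers from the Gaussian Poincaré inequality via the comparability $f_\e \sim f_0$. Lower semicontinuity yields a minimizer $w_\e$, whose Euler--Lagrange equation has vanishing Lagrange multiplier by the choice of $c_\e$; this produces a weak solution of \eqref{P-6}, upgraded to $C^\infty$ by interior Schauder estimates.

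The principal obstacle is \eqref{P-10}, since the weighted $H^1$ bound alone only forbids growth faster than $e^{|x|^2/8}$. Outside $K$, $w_\e$ solves the Ornstein--Uhlenbeck equation $\Delta w_\e - \tfrac{x}{2}\cdot \n w_\e = d$ with constant right-hand side $d := c_\e - \log\frac{1+\e}{\e}$. I would construct an explicit radial particular solution $P(x) = -2d\log(1+|x|^2)$, for which a direct computation gives $LP = d + O(|x|^{-2})$; the remainder $v := w_\e - P$ then solves a homogeneous OU equation modulo a decaying source and lies in $L^2(\gamma)$. A spherical-harmonic decomposition $v = \sum_k v_k(r) Y_k(\theta)$ reduces the problem to a family of radial ODEs; of the two linearly independent solutions in each mode, one grows like $r^{-(n-1)-2k} e^{r^2/4}$ and is excluded by Gaussian integrability, while the other is at most polynomial. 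A semigroup representation on $L^2(f_\e\,dx)$, combined with the spectral gap of the Ornstein--Uhlenbeck generator perturbed by a compactly supported potential, then yields $w_\e(x) = O(|x|)$, which is amply sub-quadratic. Property \eqref{P-4} follows from the decay of $f_\e\,|\n w_\e|$ at infinity via exhaustion by balls.
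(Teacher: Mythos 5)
Your approach is genuinely different from the paper's. The paper uses an ergodic/penalization scheme in the spirit of Bardi--Cesaroni: it first solves $\delta w_\delta-(Lw_\delta+\ell)=0$ by Perron's method (the zeroth-order term $\delta w$ makes the maximum principle available on all of $\R^n$), controls the growth of every $w_\delta$ directly by comparing with the explicit quadratic barrier $h\,u_0+\mathrm{const}$, then normalizes and passes to the limit $\delta\to 0$. The sub-quadratic growth estimate \eqref{P-10} comes for free from Step~1 because it is \emph{uniform in $\delta$}. You instead propose the direct method of calculus of variations on $H^1(f_\e\,dx)$ modulo constants, which is cleaner for existence (and identifies $c_\e=\int\ell f_\e\,dx$ immediately), but it shifts all the difficulty to recovering the pointwise growth control afterward, which is precisely what the paper's $\delta$-scheme is designed to deliver cheaply. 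A further practical advantage of the paper's method is that it transplants verbatim to the submanifold setting of Section~3, where a weighted Poincar\'e inequality for $e^{-|x|^2/4}\,dvol$ on $\Sigma$ is not readily available.

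The genuine gap is in your growth estimate, and you identify the right obstacle: the $H^1(f_\e\,dx)$ bound alone cannot yield \eqref{P-10}, since it is consistent with $e^{|x|^2/8}$ growth. Your proposed fix---particular solution $P$ plus spherical-harmonic decomposition plus a ``semigroup representation''---does not close this gap as written. First, excluding the Gaussian-growth solution mode by mode using $L^2(\gamma)$-integrability is correct, but the surviving polynomially-bounded branch has degree increasing in $k$, and the $L^2(\gamma)$ condition only gives $\ell^2$ control on the mode coefficients; summing over infinitely many modes requires quantitative higher-regularity estimates in $k$ (e.g. control of $\sum_k(1+k)^{2s}\|v_k\|^2_{L^2(\gamma)}$ for $s>(n-1)/2$ via Sobolev embedding on $S^{n-1}$) that you do not supply, and a na\"{\i}ve Cauchy--Schwarz across modes only returns the same $e^{|x|^2/8}$ bound. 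Second, the claimed conclusion $w_\e(x)=O(|x|)$ is stronger than anything that sketch could produce and would force $w_\e$ to be asymptotically affine, which is not to be expected for generic $f$; what is needed and plausible is only $w_\e=o(|x|^2)$. The ``spectral gap of the perturbed OU generator'' does give exponential $L^2(\gamma)$-decay of the semigroup, but a pointwise bound at infinity does not follow from it without additional work. Finally a minor slip: with $L=\Delta-\tfrac{x}{2}\cdot\nabla$ one has $L\bigl(\log(1+|x|^2)\bigr)\to -1$ as $|x|\to\infty$, so the particular solution should be $P(x)=-d\log(1+|x|^2)$ rather than $-2d\log(1+|x|^2)$.
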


\begin{proof}
	
	Set $$Lw:=L_gw:= \Delta w+\nabla \log g \cdot \nabla w.$$ The idea of proof is similar to the one in  \cite[Theroem 6.1]{Bardi-2016}. We will divide into several steps to prove the result.

	\

	\noindent	\textbf{Step 1.}  For every $\delta>0$, the equation
	\begin{equation}
	\label{P-11}
	\delta w-(Lw+ \ell)=0~~~ \text{in} ~{\mathbb R^n}
	\end{equation}
	has a bounded solution $w_\delta$. Moreover  for every $h\in(0,1]$ there exists $R_h>0$ independent of $\d$ such that
	\begin{equation}
	\label{P-12}
	-h\frac{|x|^2}{2}+\min_{|x|\leq R_h}w_\delta\leq w_\delta \leq \max_{|x|\leq R_h}w_\delta+h\frac{|x|^2}{2}.
	\end{equation}
	
	One can follow the Perron method or the sub/supersolution method to obtain a solution of \eqref{P-11} for $\d>0$.
	Since $\ell(x) = \log \frac \e {1+\e}$ outside $K$, $\|\ell\|_\infty$ is bounded for any fixed $\e>0$. It is clear that $\d^ {-1} \|\ell\|_\infty$ is a supersolution of \eqref{P-11}, while $-\d^ {-1} \|\ell\|_\infty$ a subsolution.
	Consider the set $S$ of all subsolutions of \eqref{P-11}  between $-\d^ {-1} \|\ell\|_\infty$ 
	and $\d^ {-1} \|\ell\|_\infty$ and set 
	$w_\d(x):=\sup \{ \varphi (x) ,\, \varphi\in S\}$. Notice that the maximum principle holds true for \eqref{P-11} for any $\d>0$. Hence by the
	Perron method  we obtain a solution $w_\d$ with
	\begin{equation}
	\label{P-13}
	\|w_\delta\|_{\infty}\leq \frac{1}{\delta}\|\ell\|_{\infty}.
	\end{equation} 
	(\ref{P-12}) follows from the maximum principle as follows.  For any 
	fixed $h\in (0,1]$, 
	and choose $R_h>0$  later, such that $K\subset B_{R_h}$. 
	Define  \begin{equation}
	v(x): =h  u_0+\max_{|y|\leq R_h}w_\delta.\nonumber
	\end{equation} One can check that it is a supersolution to (\ref{P-11}) in $|x|>R_h$. In fact, due to (\ref{P-13}), we have for 
	$|x|>R_h$,
	\begin{equation}
	\label{P-15}
	\begin{aligned}
	\delta v-(Lv+\ell) &=\delta h \frac{|x|^2}{2}+\delta  \max_{|y|\leq R_h}w_{\delta}-(h Lu_0+\ell) \\
	&\geq \delta h \frac{|x|^2}{2}  +\delta \max_{|y|\leq R_h}w_{\delta} -h\left( n-\frac{|x|^2}{2}\right) -\|\ell\|_{\infty} \\
	&{\geq \delta h \frac{|x|^2}{2}   -h\left( n-\frac{|x|^2}{2}\right) -2\|\ell\|_{\infty}} \\
	&\geq 0,
	\end{aligned}
	\end{equation}
	for large $R_h$, which is independent of $
	\delta$. {Clearly $v\ge w_\delta$ on $\p B_{R_h}$ and  $\lim\limits_{|x|\to\infty} (v-w_\delta)\ge 0$. }
	Then the maximum principle implies that $w_\d \le v$ in $\R^n\setminus B_{R_h}$, and hence $w_\d \le v$ in $\R^ n$. Another inequality in \eqref{P-12} holds similarly.

	\
	
	\noindent	\textbf{Step 2.}  We claim that  $v_\delta=w_\delta-w_\delta(0)$ are uniformly bounded in every compact set $A$.

	If not, there exists a compact set $A$ satisfying $A\supset \{x|~|x|<R_1\}$  such that 
	\begin{equation}
	\epsilon^{-1}_{\delta}:=\|v_{\delta}\|_{L^{\infty}(A)}\to +\infty.\nonumber   
	\end{equation}
	Set  $\psi_\delta=\epsilon_{\delta}v_{\delta}$. Then we have $\|\psi_\delta\|_{L^{\infty}(A)}=1$ and $\psi_\delta(0)=0$.
	From Step 1 we know	for any $x$\begin{equation}
	\label{P-16}
	\begin{aligned}
	\psi_\delta(x)&=\epsilon_{\delta}v_{\delta}=\frac{w_\delta(x)-w_\delta(0)}{\|v_\delta\|_{L^\infty(A)}}\\
	&\leq \frac{\frac{|x|^2}{2}+\max_{|x|\leq R_1}(w_\delta-w_\delta(0))}{\|w_\delta-w_\delta(0)\|_{L^{\infty}(A)}}\\
	&\leq 1+\epsilon_{\delta}\frac{|x|^2}{2}.\\ 
	\end{aligned}   
	\end{equation}
	Similarly, we also obtain
	\begin{equation}\label{P-16a}
	\psi_\delta(x)\geq -1-\epsilon_{\delta}\frac{|x|^2}{2}.
	\end{equation}
	Thus the sequence $\psi_\delta(x)$ is uniformly bounded in every compact subset in $\mathbb R^n$. 
	Since $\psi_\delta(x)$ solves the equation
	\begin{equation}
	\delta \psi_\delta+\delta \epsilon_\delta w_\delta(0)-\Delta \psi_\delta-\nabla \log g \cdot \nabla \psi_\delta-\epsilon_\delta \ell(x)=0 ~~\text{in} ~~ {\mathbb R^n}, \nonumber    
	\end{equation}
	and $\ell$ and  $\delta w_\delta(0)$ are bounded, we apply  the theory of linear elliptic equations to show that $\psi_\delta$ subconverges to $\psi$ in every compact set of $\mathbb R^n$. 
	It is easy to check that  $\psi$ satisfies 	\begin{equation}
	-\Delta \psi-\nabla \log g \cdot \nabla \psi=0  \qquad \text{in } {\mathbb R^n}\nonumber
	\end{equation}
	and 	 $\|\psi\|_{L^{\infty}(A)}=1$. On the other hand,  (\ref{P-16})-\eqref{P-16a} imply  that $|\psi|\leq 1$ in $ x\in {\mathbb R^n}\backslash A $. This implies that $\psi$ attains either a global maximum or a global minimum in $A$. Hence it must be a constant, which is either $1$ or $-1$,  by the strong maximum  principle. This contradicts  $\psi(0)=0$.
	
	\

 \
	
	\noindent\textbf{Step 3.}  Existence of solution  $w$ to (\ref{P-6}).
	
	From (\ref{P-13}), we know that $\delta w_\delta(0)$ is bounded, and hence  $\delta w_\delta(0) \to -c$ as $\delta\to 0$, by taking possibly a subsequence. 
	By Step 2, $v_\delta$ are uniformly bounded in every compact subset in  $\mathbb R^n$.
	Again the standard theory implies that $v_\delta$ (sub)converges to a $w$, which is a solution of \eqref{P-6}.
	
	\
	
	\noindent\textbf{Step 4.} The growth condition of $w$.
	
	Since (\ref{P-12}) is  independent of $\delta$ and it also holds for $v_\d:=w_\d-w_\d(0)$, we have that for any $h\in(0,1]$ there exists $R_h$ such that
	\begin{equation}
	-h\frac{|x|^2}{2}+\min_{|x|\leq R_h}w\leq w \leq \max_{|x|\leq R_h}w+h\frac{|x|^2}{2},\nonumber 
	\end{equation}
	which implies \eqref{P-10}. This finishes the proof of the Proposition.
\end{proof}



We remark that one can show the uniqueness of $w$ and $c$ of \eqref{P-6} with \eqref{P-10}. 

\

Now we  show Proposition \ref{prop1}.

\

\noindent{\it Proof of Proposition \ref{prop1}.} Let $u_\e=w_\e+u_0$. It is clear that $u_\e$ satisfies \eqref{P-1} and \eqref{P-2}. \eqref{P-4} is proved in the following Lemma.  \qed

\begin{lemma}\label{lem2.4}
	We have
	\[
	\int_{\R^n} \div(f_\e\n u_\e) \, dx=0. \]
\end{lemma}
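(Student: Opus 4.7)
The strategy is to exhaust $\R^n$ by balls $B_R$, apply the classical divergence theorem on each $B_R$, and show the boundary flux vanishes as $R\to\infty$. That is,
\begin{equation*}
\int_{B_R}\div(f_\e \n u_\e)\,dx = \int_{\p B_R} f_\e\langle \n u_\e,\nu\rangle\,d\sigma,
\end{equation*}
and it suffices to prove the right-hand side tends to $0$.

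Write $u_\e=u_0+w_\e$, so that $\n u_\e = x + \n w_\e$. Outside $K=\supp f$, one has $f_\e = \frac{\e}{1+\e}f_0 = A_\e e^{-|x|^2/4}$ for a constant $A_\e$. Hence, for $R$ large enough that $B_R\supset K$,
\begin{equation*}
\left|\int_{\p B_R} f_\e\langle \n u_\e,\nu\rangle\,d\sigma\right| \le A_\e e^{-R^2/4}\left(R\,|\p B_R| + \int_{\p B_R}|\n w_\e|\,d\sigma\right).
\end{equation*}
The first summand contributes $O(R^n e^{-R^2/4})\to 0$, so the remaining task is to produce a polynomial pointwise bound on $|\n w_\e|$.

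Such a bound follows from interior elliptic regularity. Outside $K$, equation \eqref{P-6} reduces to $\Delta w_\e - \frac{1}{2}\, x\cdot\n w_\e = c_\e - \ell(x)$ with $\ell$ uniformly bounded. On any unit ball $B_1(x_0)$ with $|x_0|$ large, this is a linear uniformly elliptic equation whose drift has norm of order $|x_0|$ and whose right-hand side is bounded. Combining the quadratic $L^\infty$-bound $|w_\e(x)|\le \frac{1}{2}|x|^2+C$ (take $h=1$ in \eqref{P-12}) with standard Schauder (or $L^p$) gradient estimates on unit balls yields a pointwise bound $|\n w_\e(x)|\le C(1+|x|)^N$ for some $N=N(n)$. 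The Gaussian tail $e^{-R^2/4}$ then dominates this polynomial factor, so $\int_{\p B_R}f_\e|\n w_\e|\,d\sigma = O(R^{n+N-1}e^{-R^2/4})\to 0$, completing the proof. The principal technical point is tracking how the Schauder constants depend on $|x_0|$, since the drift grows linearly and scale-invariant estimates cannot be applied naively; alternatively, multiplying \eqref{P-6} by a suitable cutoff and integrating against $f_\e$ produces weighted $L^2$-bounds on $\n w_\e$ with respect to the Gaussian measure, which already suffice to drive the boundary flux to zero.
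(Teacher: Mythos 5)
Your route is genuinely different from the paper's, and the paper's is more economical. The paper never passes to a boundary integral at all: it takes a cutoff $\xi$ supported in $B_{2r}$ with $|\nabla\xi|\le 2/|x|$, writes $\int_{B_r}\div(f_\e\nabla u_\e)$ as $\int f_\e\nabla u_\e\cdot\nabla\xi$ plus an exponentially small tail, bounds the first term by Cauchy--Schwarz through $\left(\int_{B_{2r}\setminus B_r} f_\e\right)^{1/2}\left(\int f_\e|\nabla u_\e|^2\right)^{1/2}$, and establishes a uniform bound on $\int f_\e|\nabla u_\e|^2$ by a Caccioppoli argument (test \eqref{P-1} against $u\xi^2$ and absorb). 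This is entirely integral and requires no interior regularity theory, only the quadratic bound on $u_\e$ and \eqref{P-1} itself. Your approach, by contrast, runs through the classical divergence theorem on $B_R$ and needs a \emph{pointwise} bound on $|\nabla w_\e|$ along $\partial B_R$.

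The point you flag as the principal technical issue is indeed a real gap as written: the constant in the interior gradient estimate on $B_1(x_0)$ does not obviously grow only polynomially in $|x_0|$ once the drift $\frac12 x$ is of size $|x_0|$. It can be closed by rescaling so that the drift becomes $O(1)$: set $\tilde w(y)=w_\e(x_0+y/|x_0|)$ on $|y|\le 1$; the rescaled drift is $\frac1{2|x_0|}(x_0+y/|x_0|)=O(1)$ and the rescaled right-hand side is $O(|x_0|^{-2})$, so the standard interior gradient estimate gives $|\nabla_y\tilde w(0)|\le C\bigl(\sup_{B_{1/|x_0|}(x_0)}|w_\e|+1\bigr)\lesssim |x_0|^2$, hence $|\nabla w_\e(x_0)|=|x_0|\,|\nabla_y\tilde w(0)|\lesssim |x_0|^3$. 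With this the Gaussian factor kills the boundary flux as you claim, and your argument is complete. Absent some such scaling, ``standard Schauder'' does not settle the $|x_0|$-dependence of the constant, so the step should not be left as an aside.

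Your final remark that weighted $L^2$ gradient bounds ``already suffice to drive the boundary flux to zero'' is imprecise: an $L^2$ bound over $B_R$ does not control the flux over the sphere $\partial B_R$ for every $R$. To make that variant rigorous you either adopt the paper's cutoff structure outright (which replaces the boundary integral), or you observe that $\int_{\R^n} f_\e|\nabla u_\e|\,dx<\infty$ forces $\liminf_{R\to\infty}\int_{\partial B_R}f_\e|\nabla u_\e|\,d\sigma=0$ by the coarea formula, and since $\div(f_\e\nabla u_\e)\in L^1(\R^n)$ the limit of $\int_{B_R}\div(f_\e\nabla u_\e)\,dx$ exists and may be evaluated along such a subsequence of radii.
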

\begin{proof} For simplicity of notation we omit the subscript $\e$. 
	From the construction of $w$, it is clear  that 
	\begin{eqnarray}\label{u-quad}|u(x)|  \le C(1+|x|^2)\end{eqnarray} for some $C>0$, while $f$ is exponentially decay. 
	Let $\xi$ be a cut-off function with support in $B_{2r}$ satisfying $\xi=1$ in $B_r$ and $|\n \xi(x)|\le 2 /|x|$ in $B_{2r}\backslash B_r$.
	It is easy to check that
	\begin{equation*}
	\begin{array}{rcl} \ds 
	\left| \int _{B_r} \div (f\n u) dx\right|&\le & \ds
	\left| \int _{\R^n} \div (f\n u) \xi dx\right|
	+	\left| \int _{\R^n\backslash B_r} \div (f\n u) \xi dx\right|.
	\end{array}
	\end{equation*}
	Using Equation \eqref{P-1}  and the growth of $f$ and $u$, it is easy to see that the second term in the right hand side tends to zero as $r\to \infty$. The first term is equal to
	\[\begin{array}{rcl}
	\ds \left| \int _{\R^n}  f\n u\n\xi dx\right| & \le & \ds\vs
	\frac 1 r \int_{B_{2r}\backslash B_r} f|\n u| dx \\
	&\le& \ds \frac 1 r  \left(\int_{B_{2r}\backslash B_r} fdx\right)^{\frac 12}
	\left( \int_{B_{2r}\backslash B_r} f|\n u|^2 dx\right)^{\frac 12}.\end{array}
	\]Hence the Lemma follows, if 
	$\int_{B_{r}} f|\n u|^2 dx$
	is uniformly bounded. This is true by a standard argument as follows
	{
		\begin{eqnarray*}
   \int_{\R^n }\xi^2|\n u|^2  fdx&=&-\int_{\R^n} u\xi^2\div(f\n u)- \int_{\R^n} 2\xi u\n\xi\n u f 
			\\ & \le & \left|\int_{\R^n} u\xi^2{\rm div}(f\n u)\right|+  \frac 12 \int_{\R^n} \xi^2 |\n u|^2 f+ 8\int_{\R^n} u^2 |\n \xi|^2 f.
		\end{eqnarray*}
		Hence
		\begin{eqnarray*}
  \int_{B_{r}} |\n u|^2  fdx  \le 
	\int_{\R^n }\xi^2 |\n u|^2  fdx        \le
			2\left|\int_{\R^n} u\xi^2{\rm div}(f\n u)\right| + 16  \int_{\R^n} u^2 |\n \xi|^2 f.
		\end{eqnarray*}
		From Equation \eqref{P-1} and \eqref{u-quad}, we see that 
		$\left|\int_{\R^n} u\xi^2{\rm div}(f\n u)\right|$ and $\int_{\R^n} u^2 |\n \xi|^2 f$ are both uniformly bounded. Thus $\int_{B_{r}} f|\n u|^2 dx$
		is uniformly bounded. The proof is completed.
	}
\end{proof}

\begin{remark}\label{rmk2.4}
It is easy to see that the above argument works for a similar equation as \eqref{P-1} on a complete manifolds with a polynomial volume growth.
    
\end{remark}

Now we can prove the LSI in the Euclidean space.

\

\noindent{\it Proof of logarithmic Sobolev inequality \eqref{eq1.1}}. With Proposition \ref{prop1} we can follow the idea in \cite{Brendle22} to 
give a proof of \eqref{eq1.1}.  Let $f_\e$ be the solution in Proposition \ref{prop1}. We  need to show that
\begin{equation}\label{25}
\a_\e \ge n+\frac n 2 \log (4\pi).
\end{equation}

Let us denote $u_\e$ by $u$, and define 
\begin{equation} \label{u}
U:=\{x\in \mathbb R^n: \n^2u (x)\geq0\},
\end{equation}
and a map $\Phi:\mathbb R^n \to \mathbb R^n$ by
\begin{equation}\label{Phi}
\Phi(x)=\nabla u(x)
\end{equation}
for any $x\in \mathbb R^n$.
\begin{lemma}
	\label{L-3}
	We have $\Phi(U)= \mathbb R^n$.
\end{lemma}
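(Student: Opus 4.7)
The plan is to use the standard surjectivity argument of the ABP method: for any prescribed $\xi \in \mathbb{R}^n$, I will produce a point $\bar x \in U$ with $\nabla u(\bar x) = \xi$ by minimizing the function $x \mapsto u(x) - \langle \xi, x\rangle$ over $\mathbb{R}^n$.

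First I would show coercivity. Writing $u = u_\epsilon = u_0 + w_\epsilon = \tfrac{1}{2}|x|^2 + w_\epsilon$, the sub-quadratic growth \eqref{P-10} of $w_\epsilon$ ensures that for every $h\in(0,1)$ there exists $C_h>0$ such that $|w_\epsilon(x)| \le \tfrac{h}{2}|x|^2 + C_h$ for all $x\in\mathbb{R}^n$. Consequently,
\begin{equation*}
u(x) - \langle \xi, x\rangle \;\ge\; \frac{1-h}{2}|x|^2 - |\xi|\,|x| - C_h \;\longrightarrow\; +\infty \quad\text{as } |x|\to\infty,
\end{equation*}
so the function $x\mapsto u(x)-\langle \xi,x\rangle$ is coercive and, being continuous (in fact smooth, by elliptic regularity applied to \eqref{P-5}), attains its infimum at some $\bar x \in \mathbb{R}^n$.

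At the interior minimizer $\bar x$ the first- and second-order necessary conditions give $\nabla u(\bar x) - \xi = 0$ and $\nabla^2 u(\bar x) \ge 0$. Hence $\bar x \in U$ and $\Phi(\bar x) = \nabla u(\bar x) = \xi$. Since $\xi$ was arbitrary, $\Phi(U) = \mathbb{R}^n$, as required.

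The only non-routine point is coercivity, and this is exactly where the sub-quadratic growth estimate \eqref{P-10} for $w_\epsilon$ (which was the whole purpose of the careful construction in Proposition \ref{prop2}) is used; the quadratic background $u_0 = \tfrac12 |x|^2$ dominates any sub-quadratic perturbation together with the linear term $\langle \xi, x\rangle$. I do not anticipate any additional obstacle: the rest is just the standard first/second-order optimality conditions, as employed in Brendle's argument in the closed case.
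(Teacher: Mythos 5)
Your proof is correct and follows the same argument as the paper: minimize $q(x)=u(x)-x\cdot\xi$, use the superlinear growth of $u$ to get coercivity, and read off $\nabla u(\bar x)=\xi$ and $\nabla^2 u(\bar x)\ge 0$ at the global minimizer. The paper invokes the growth condition \eqref{P-2} directly for coercivity, whereas you re-derive it from the decomposition $u=u_0+w_\epsilon$ and the sub-quadratic bound \eqref{P-10}; this is only a cosmetic difference.
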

\begin{proof}
	For any $\xi\in \mathbb R^n$, consider $q(x)=u(x)-x\cdot\xi$. 
	It is crucial that \eqref{P-2} implies that
	$q$ attains a global minimum  at $\bar{x}  \in \mathbb R^n$. It is then easy to see that $\bar x \in U$ and $\Phi(\bar x)=\xi$, and hence $\Phi(U)=\R^n$.
\end{proof}

\begin{lemma}
	We have that for any $x\in U$ the Jacobian determinant of $\Phi$  satisfies
	$$0\leq e^{-\frac{|\Phi(x)|^2}{4}}\det \n\Phi(x)\leq f_\epsilon (x)e^{-n+\alpha_\epsilon}.$$
\end{lemma}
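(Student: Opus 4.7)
The plan is to show both inequalities by unfolding the definitions and applying the arithmetic–geometric mean inequality, using the PDE \eqref{P-1} for $u=u_\epsilon$ rewritten in the non-divergence form
\begin{equation*}
\Delta u + \nabla \log f_\epsilon \cdot \nabla u = \log f_\epsilon - |\nabla \log f_\epsilon|^2 + \alpha_\epsilon.
\end{equation*}
First, since $\Phi = \nabla u$, one has $\nabla \Phi = \nabla^2 u$, and by the definition of $U$ the matrix $\nabla^2 u(x)$ is positive semidefinite on $U$. This immediately gives the lower bound $e^{-|\Phi(x)|^2/4}\det\nabla\Phi(x) \ge 0$.

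For the upper bound, I would first dispose of the degenerate case where $\Delta u(x)=0$: then positive semidefiniteness forces $\nabla^2 u(x)=0$, so $\det\nabla\Phi(x)=0$ and there is nothing to prove. Assuming $\Delta u(x)>0$, the arithmetic–geometric mean inequality for positive semidefinite matrices yields
\begin{equation*}
\det\nabla^2 u(x) \le \Bigl(\tfrac{\Delta u(x)}{n}\Bigr)^n,
\end{equation*}
and then the scalar inequality $\log t \le t - 1$ applied to $t=\Delta u(x)/n$ gives
\begin{equation*}
n\log\Bigl(\tfrac{\Delta u(x)}{n}\Bigr) \le \Delta u(x) - n,
\end{equation*}
so $\det\nabla^2 u(x) \le e^{\Delta u(x) - n}$.

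The next step is to use the equation to rewrite $\Delta u(x) - n$ and absorb all terms on the right-hand side. Substituting the PDE gives
\begin{equation*}
\Delta u(x) - n = \log f_\epsilon(x) + \alpha_\epsilon - n - \nabla\log f_\epsilon(x)\cdot\nabla u(x) - |\nabla\log f_\epsilon(x)|^2.
\end{equation*}
The decisive observation—reflecting precisely the optimality of the canonical pair $(u_0,f_0)$—is the algebraic identity
\begin{equation*}
-\nabla\log f_\epsilon\cdot\nabla u - |\nabla\log f_\epsilon|^2 = \tfrac{|\nabla u|^2}{4} - \Bigl|\tfrac{\nabla u}{2} + \nabla\log f_\epsilon\Bigr|^2 \le \tfrac{|\nabla u|^2}{4} = \tfrac{|\Phi(x)|^2}{4}.
\end{equation*}
Combining these inequalities yields $\Delta u(x)-n \le \log f_\epsilon(x) + \alpha_\epsilon - n + \tfrac{|\Phi(x)|^2}{4}$, and exponentiating gives exactly
\begin{equation*}
\det\nabla\Phi(x) \le e^{\Delta u(x)-n} \le f_\epsilon(x)\,e^{\alpha_\epsilon - n}\,e^{|\Phi(x)|^2/4},
\end{equation*}
which rearranges to the claimed bound.

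There is no real obstacle here: the argument is essentially a book-keeping exercise once the right identity for completing the square is spotted. The only subtlety worth flagging is the handling of the degenerate case $\Delta u = 0$ so that one may freely invoke $\log$, and the fact that equality in the completion-of-squares step characterizes the canonical pair $(u_0,f_0)$ from Lemma \ref{L-1}, which is consistent with the sharpness of the final inequality.
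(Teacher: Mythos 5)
Your proposal is correct and follows essentially the same argument as the one the paper defers to (Brendle's proof, reproduced in the submanifold setting as Lemma~\ref{lem3.2}): positive semidefiniteness of $\nabla^2 u$ on $U$, the elementary determinant bound $\det\nabla^2 u \le e^{\Delta u - n}$, substitution of the PDE for $\Delta u$, and completing the square on the cross term $-\nabla\log f_\epsilon\cdot\nabla u-|\nabla\log f_\epsilon|^2$ to extract $\tfrac{|\Phi|^2}{4}$. The only cosmetic difference is that you route through the AM--GM inequality before applying $\log t\le t-1$, whereas applying $\lambda_i\le e^{\lambda_i-1}$ directly to each eigenvalue of $\nabla^2 u\ge 0$ gives $\det\nabla^2 u\le e^{\Delta u-n}$ in one stroke and also removes the need for the separate $\Delta u=0$ case; the conclusion is identical either way.
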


\begin{proof} The proof is the same as in \cite{Brendle22}. We do not repeat it here, but refer to the proof of Lemma \ref{lem3.2} below.
\end{proof}

Now \eqref{25} follows from
\begin{equation}
\begin{aligned}
1&=(4\pi)^{-\frac{n}{2}}\int_{\mathbb R^n}e^{-\frac{|\xi|^2}{4}}d\xi\\
&\leq (4\pi)^{-\frac{n}{2}}\int_{\mathbb R^n}e^{-\frac{|\Phi(x)|^2}{4}}|\det \n\Phi(x)|1_{U}(x)dx\\
&\leq (4\pi)^{-\frac{n}{2}}\int_{\mathbb R^n}f_\epsilon (x)e^{-n+\alpha_\epsilon}dx
=(4\pi)^{-\frac{n}{2}}e^{-n+\alpha_\epsilon}.  \nonumber
\end{aligned}
\end{equation} 
Inequality \eqref{25}, together with \eqref{P-1} and (\ref{P-4}), implies
\begin{equation}
\int_{\mathbb R^n}\frac{|\nabla f_\epsilon|^2}{f_\epsilon}dx-\int_{\mathbb R^n} f_\epsilon\log f_\epsilon dx =\a_\e \int_{\R^n} f_\e dx \ge n+\frac{n}{2}\log(4\pi) .\nonumber    
\end{equation}
By the dominated convergence, letting $\epsilon \to 0$, we have 
\begin{equation}
\int_{\R ^n}\frac{|\nabla f|^2}{f}dx-\int_{\R^n} f\log f dx\geq n+\frac{n}{2}\log(4\pi) ,\nonumber    
\end{equation}
for any non-negative $f$ with compact support and $\int_{\R^n} f dx=1$. This finishes the proof of \eqref{eq1.1}. \qed


\section{LSI for noncompact self-schrinkers}\label{sec3}

Let $\S^n\subset \R^{n+m}$ be an  $n$-dimensional complete, non-compact Riemannian submanifold with the induced metric $g$ from $\R^{n+m}$. In this section, we use  $\n$, $\n^2$, $\Delta$ and $dvol$ to denote the Levi-Civita connection, the Hessian, the  Laplacian and the volume form with respect to $g$ respectively. For a vector field $Y$ on $\R^{n+m}$, we use $Y^\top$  and $Y^\perp$ to denote the component of $Y$ that is tangential to $\S$ and normal to $\S$ respectively. Let $II$ and $H$ be the second fundamental form and the mean curvature vector field respectively.

Consider on $\R^{n+m}$ the function $$f_0(x):=(4\pi)^{-\frac n 2} e^{-\frac {|x|^2} 4},$$ which is equal to $\gamma_n$ if we restrict it to an $n$-dimensional subspace, and the function $$u_0:=\frac 12 |x|^2.$$
We denote the restriction of $f_0$ and $u_0$ on $\Sigma$ still by $f_0$ and $u_0$ respectively. { In this section we shall also use the volume form $$d\g=\g_0 dvol=(4\pi)^{-\frac n 2} e^{-\frac {|x|^2} 4}dvol$$ on $\S$. Notice that it is not the restriction of the Gaussian measure in $\R^{n+m}$ to $\S$.}

We first have the following simple, but as above crucial Lemma.

\begin{lemma}\label{lem5.1}Let $u_0:\Sigma\to \R$ defined by
	$u_0:=\frac 12 |x|^2$. We have that $u_0$ satisfies
	\begin{equation}\label{eq5.1}
	\div(f_0\nabla u_0) =f_0 \log f_0 -
	\frac {|\n f_0|^2}{f_0}+ \left|\frac 12 x^\perp + H\right|^2f_0 -|H|^2f_0 +\a_0f_0,
	\end{equation}
	where  
	$\a_0=n+\frac n 2 \log(4\pi)$.
\end{lemma}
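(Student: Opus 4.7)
The plan is to verify \eqref{eq5.1} by direct computation of both sides, using the orthogonal decomposition $x = x^\top + x^\perp$ along $\S$.

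First I would compute the intrinsic gradient and Laplacian of $u_0 = \frac{1}{2}|x|^2$ on $\Sigma$. Since the ambient gradient of $u_0$ is $x$ and its ambient Hessian is the identity, one immediately obtains $\n u_0 = x^\top$, and the Gauss formula $\bn_X Y = \n_X Y + II(X,Y)$ yields, for an orthonormal tangent frame $\{e_i\}$,
\[
\n^2 u_0(e_i, e_j) = \delta_{ij} + \<II(e_i, e_j), x^\perp\>,
\]
so that $\Delta u_0 = n + \<H, x^\perp\>$.

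Next I would compute $\n f_0 = -\frac{1}{2} f_0 \, x^\top$ directly from $f_0 = (4\pi)^{-n/2} e^{-|x|^2/4}$, which gives $\frac{|\n f_0|^2}{f_0} = \frac{1}{4} f_0 |x^\top|^2$ and $\<\n f_0, \n u_0\> = -\frac{1}{2} f_0 |x^\top|^2$. The product rule then yields
\[
\div(f_0\n u_0) = f_0 \Delta u_0 + \<\n f_0, \n u_0\> = f_0\bigl(n + \<H, x^\perp\> - \tfrac{1}{2}|x^\top|^2\bigr).
\]

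For the right-hand side, I would substitute $\log f_0 = -\frac{n}{2}\log(4\pi) - \frac{|x|^2}{4}$ and expand $\bigl|\frac12 x^\perp + H\bigr|^2 = \frac14|x^\perp|^2 + \<x^\perp, H\> + |H|^2$. Then the $\frac{n}{2}\log(4\pi)$ contributions cancel against $\a_0$, the $|H|^2$ terms cancel against $-|H|^2 f_0$, and using $|x|^2 = |x^\top|^2 + |x^\perp|^2$ the remaining quadratic terms collapse via $-\frac{|x|^2}{4} - \frac{1}{4}|x^\top|^2 + \frac{1}{4}|x^\perp|^2 = -\frac{1}{2}|x^\top|^2$, matching the expression obtained above. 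The computation is entirely elementary and presents no real obstacle; the content of the lemma is the striking observation that the pair $(u_0, f_0)$ is a canonical solution of the driving equation \eqref{eq0.5}, which is precisely what powers the perturbation scheme in the rest of the paper.
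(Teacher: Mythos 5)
Your proof is correct and follows essentially the same route as the paper's: compute $\n u_0 = x^\top$, $\Delta u_0 = n + \<x^\perp, H\>$, $\n\log f_0 = -\tfrac12 x^\top$, then verify both sides reduce to $n + \<x^\perp, H\> - \tfrac12|x^\top|^2$ after using $|x|^2 = |x^\top|^2 + |x^\perp|^2$. The only cosmetic difference is that you work with the product-rule form $\div(f_0\n u_0) = f_0\Delta u_0 + \<\n f_0,\n u_0\>$ while the paper first divides by $f_0$; the arithmetic is identical.
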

\begin{proof}
	See the proof for example in \cite{Colding-Minicozzi}. For the convenience of the reader and also fixing the notations, we give the computation briefly. 
 \eqref{eq5.1} is equivalent to
 \begin{equation}\label{eq5.1a}
	f_0^{-1}\div(f_0\nabla u_0) = \log f_0 -
	|\n \log f_0|^2+ \left|\frac 12 x^\perp + H\right|^2 -|H|^2 +\a_0.
	\end{equation}
 It is easy to see $$\n u_0=x^\top, \quad \n \log f_0 =-\frac 12 x^\top, \quad \Delta u_0=n+\<x^\perp, H\>.$$ Hence the left hand side of \eqref{eq5.1a} is $n+\<x^\perp, H\>-\frac 12|x^\top|^2$, while the right hand side is
    \[\begin{array}
        {rcl} 
   &&\ds\vs -\frac n 2 \log (4\pi) -\frac 14 |x|^2 - \frac 14 |x^\top|^2 +\frac 14 |x^\bot |^2 + \<x^\perp, H\> +\a_0 \\
    &=& \ds  n  +\<x^\bot, H\> -\frac 12 |x^\top|^2.\end{array}
    \]
    The proof is completed.
\end{proof}

As mentioned in the Introduction, in view of \eqref{eq5.1} we consider the following
equation
\begin{equation}\label{eq5.2}
\div(f\nabla u) =f \log f-
	\frac {|\n f|^2}{f} + \left|\frac 12 x^\perp + H\right|^2f -|H|^2f +\a f,
\end{equation}
which is slightly different to the one considered in \cite{Brendle22}. As in Section 2, we attempt to find a solution pair
$(u,\a)$ of \eqref{eq5.2} with a  suitable growth condition, which we define.

\begin{defn}[Polynomial growth] \label{Def1} Let $\S\subset \R^{n+m}$ be a submanifold with induced metric $g$. We call $\S$ admitting a polynomial growth condition, if $\Sigma$ 
    has at most polynomial volume growth and its mean curvature vector field $H$ has at most polynomial volume growth, i.e.
    \begin{equation} \label{con}
vol_g(\S\cap B(r))\le c r^k \end{equation} and  \[ |H|\le c r^l,
          \]
    for some integers  $k,l$ and $c$, which is independent  of $r$. Here $B(r)$ is a ball in $\R^{n+m}$ of radius $r$.
\end{defn}
 \eqref{con} is called extrinsic polynomial volume growth condition.
Here for the simplicity we just use the extrinsic volume growth condition. In fact the intrinsic polynomial volume condition, which is weaker than the extrinsic one, is enough.

As discussed above, in order to obtain a logarithmic Sobolev inequality on $\Sigma$ we only need to consider any positive function $f:\Sigma \to\mathbb{R}_+$ with compact support and with the normalization
\[\int_\Sigma f=1.\] For $\e>0$, let $f_\e$ be defined by
\[
f_\e=\frac 1{1+\e} (f+\e f_0).
\] The aim is to find a solution $u$ of \eqref{eq5.2} with quadratic growth, for $f_\epsilon$. As above we only need to find a solution of
\begin{equation}
\label{eq5.3}
\Delta w + \n \log g \cdot \n w +\ell =c,
\end{equation}
with a decay
\begin{equation}
\lim_{|x| \to \infty}\frac {w(x)}{|x|^2}=0.
\end{equation}
Equation \eqref{eq5.3} is the difference of between Equation \eqref{eq5.1} and Equation \eqref{eq5.2}.
Hence Equation \eqref{eq5.3} has the same form as \eqref{P-6}.
It is clear that $\ell$ is bounded in $\Sigma$. Therefore the same proof given in Section 2 yields a solution $w$, together with a constant $c$. Then $u=u_0+w$, together with $\a=\a_0+c$, is a solution of \eqref{eq5.2} with a quadratic growth which implies
\begin{equation} \label{eq5.4}
\lim_{|x|\to \infty} \frac {u(x)} {|x|} =\infty.
\end{equation}

A complete Riemannian manifold $(\S, g)$ has a {\it intrinsic polynomial volume growth} if there exist $\beta >0$ and $C>0$ such that $vol(B_r) \le Cr^\beta$ for any $r>0$, where $B_r$ is the geometric ball of radius $r$ with respect to the metric $g$. 
It is clear that for submanifolds  the extrinsic polynomial volume condition is stronger than the intrinsic one.

\begin{lemma}
	\label{lem3.1a}   Let $\Sigma$ be an $n$-dimensional complete, non-compact submanifold in $\R^{n+m}$ with   polynomial 
	 growth as in Definition \ref{Def1} and $f:\Sigma\to \R$ a non-negative function with compact support.
	For $f_\epsilon$ $(\epsilon>0)$ there exists a solution $u$ of \eqref{eq5.2} with \eqref{eq5.4} and
	\begin{equation}\label{div2}
	\int_{\Sigma} \div(f_\epsilon \n u )=0.
	\end{equation}
\end{lemma}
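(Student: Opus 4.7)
The plan is to mirror the Euclidean strategy of Proposition \ref{prop1} and Lemma \ref{lem2.4}, using the canonical pair $(u_0,f_0)$ of Lemma \ref{lem5.1} to reduce \eqref{eq5.2} to a linear drift equation with bounded inhomogeneity, and to exploit the Gaussian decay of $f_\epsilon$ together with the polynomial growth hypothesis of Definition \ref{Def1} to justify the integration by parts. I first seek $u$ in the form $u=u_0+w$ with $u_0=\tfrac12|x|^2$ restricted to $\Sigma$. Subtracting \eqref{eq5.1} from \eqref{eq5.2} (for $f=f_\epsilon$) produces the equation \eqref{eq5.3},
\[\Delta w + \nabla\log f_\epsilon\cdot\nabla w + \ell = c,\]
whose inhomogeneity $\ell$ depends only on $f_\epsilon$ and $f_0$. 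Outside the compact set $K=\supp f$ the perturbation $f_\epsilon$ is a constant multiple of $f_0$, so $\nabla\log f_\epsilon=-x^\top/2$ and $\ell=\log(\epsilon/(1+\epsilon))$ there; in particular $\|\ell\|_\infty<\infty$ on all of $\Sigma$.

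The three-step argument of Proposition \ref{prop2} then carries over. For $\delta>0$, the Perron method produces a bounded $w_\delta$ of $\delta w-Lw-\ell=0$ with $L=\Delta+\nabla\log f_\epsilon\cdot\nabla$. The two-sided comparison $|w_\delta-w_\delta(0)|\le hu_0+C(h)$ outside a large ball is obtained by the maximum principle with supersolution $hu_0+C$, using the identity $Lu_0=n+\langle x^\perp,H\rangle-\tfrac12|x^\top|^2$ on $\Sigma\setminus K$ together with the polynomial growth of $|H|$ to choose $R_h$ independent of $\delta$. Local uniform boundedness of $v_\delta=w_\delta-w_\delta(0)$, interior elliptic compactness, and the strong maximum principle then yield a limit $w$ solving \eqref{eq5.3} with $w(x)/|x|^2\to 0$; hence $u=u_0+w$ solves \eqref{eq5.2} and satisfies \eqref{eq5.4}.

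For the divergence identity \eqref{div2} I adapt Lemma \ref{lem2.4} using extrinsic balls $B(r)\subset\R^{n+m}$. With a cutoff $\xi$ supported in $B(2r)$, equal to $1$ on $B(r)$ and $|\nabla\xi|\le 2/r$, one splits
\[\Bigl|\int_{\Sigma\cap B(r)}\div(f_\epsilon\nabla u)\Bigr| \le \Bigl|\int_\Sigma \xi\,\div(f_\epsilon\nabla u)\Bigr| + \Bigl|\int_{\Sigma\setminus B(r)}\xi\,\div(f_\epsilon\nabla u)\Bigr|.\]
Rewriting the divergence through \eqref{eq5.2}, the polynomial volume growth of $\Sigma$ and polynomial growth of $|H|$ are dominated by the Gaussian factor in $f_\epsilon$ on $\Sigma\setminus K$, so the second term vanishes as $r\to\infty$. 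The first term equals $|\int f_\epsilon\nabla u\cdot\nabla\xi|$, and Cauchy--Schwarz reduces matters to a uniform bound on $\int_\Sigma f_\epsilon|\nabla u|^2$, obtained exactly as in Lemma \ref{lem2.4} by testing \eqref{eq5.2} against $u\xi^2$.

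The main obstacle is the bookkeeping in this last energy estimate: because $u$ grows quadratically while $|H|^k$ and $\mathrm{vol}(\Sigma\cap B(r))$ grow polynomially, each cross term appearing after testing \eqref{eq5.2} against $u\xi^2$ must be absorbed against the Gaussian weight $e^{-|x|^2/4}$ carried by $f_\epsilon$ outside $K$. The polynomial growth hypothesis of Definition \ref{Def1} is precisely what ensures that every such term is finite and that the annular contributions on $B(2r)\setminus B(r)$ vanish in the limit $r\to\infty$; this is the place where the assumption is essential. Remark \ref{rmk2.4} anticipates that the Euclidean argument transfers to this setting, but the mean curvature contributions from the $|H|^2$ and $|x^\perp/2+H|^2$ terms in \eqref{eq5.2} must be tracked carefully at every step.
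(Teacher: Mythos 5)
Your proposal mirrors the paper's own proof precisely: the paper reduces \eqref{eq5.2} to \eqref{eq5.3} via the ansatz $u=u_0+w$ and the canonical identity of Lemma \ref{lem5.1}, notes that $\ell$ is bounded so that the Perron/vanishing-discount argument of Proposition \ref{prop2} applies verbatim, and then invokes the integration-by-parts argument of Lemma \ref{lem2.4} (as flagged in Remark \ref{rmk2.4}) under the polynomial growth hypothesis to get \eqref{div2}. You have simply spelled out a few steps the paper leaves implicit — the form of $Lu_0$ on $\Sigma\setminus K$, the use of extrinsic balls, and the bookkeeping of the $|H|^2$ and $\left|\tfrac12 x^\perp+H\right|^2$ terms against the Gaussian weight — so this is the same route, correctly executed.
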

\begin{proof}
	As discussed above we have a solution 
	$u$ of \eqref{eq5.2} with \eqref{eq5.4}. Since $\Sigma$
	has the intrinsic polynomial 
	volume growth and its mean curvature vector has at most polynomial growth, the same proof for Lemma \ref{lem2.4} works here to get \eqref{div2}. 
\end{proof}

For this $u$ we define as in \cite{Brendle22}
\[
U:=\{(x,y) \, |\, x\in \Sigma, y\in T_x^\perp \Sigma, \n ^ 2 u(x)-\<II(x), y\> \ge 0\}
\] and a map $\Phi:U\to \R^{n+m}$
by
\[
\Phi(x,y) =\n u(x) +y,
\]
where $T_x\Sigma$ is the normal space of
$\Sigma$ at $x$ and $II(x)$ is the second fundamental form.
The growth property \eqref{eq5.4}, together with the proof of Lemma 2.1 in
\cite{Brendle22}, implies that
\begin{equation}\label{RRR}
\Phi(U)=\R^{n+m}.\end{equation}

\begin{lemma}
	\label{lem3.2} For any $(x,y) \in U$ we have
	\begin{eqnarray} \label{eq42}
	0\le \det D\Phi(x,y)&=&\det (\n^2 u(x)-\<II(x), y\>) \\&\le &f(x) e^{\frac {|\Phi(x,y)|^2}4-\frac {|2H(x)+y|^2} 4+\left|\frac 12 x^\perp + H\right|^2+\a -n}\nonumber
	\end{eqnarray}
\end{lemma}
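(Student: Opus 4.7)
The plan is to adapt Brendle's ABP calculation \cite{Brendle22} to the modified equation \eqref{eq5.2}, proceeding in three stages: a local frame computation of $\det D\Phi$, an AM--GM reduction to the trace, and a completion of squares driven by the right-hand side of \eqref{eq5.2}.

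First, at $(x,y)\in U$ I would fix an orthonormal tangent frame $\{e_i\}_{i=1}^n$ of $T_x\S$ and an orthonormal normal frame $\{\nu_\a\}_{\a=1}^m$ of $T_x^\perp\S$, the latter extended along $\S$ so that its normal connection vanishes at $x$. Differentiating $\Phi(x,y)=\n u(x)+y$ and invoking the Gauss and Weingarten formulas, one finds that $D\Phi$ has block-triangular form, with tangential block $A:=\n^2 u(x)-\<II(x),y\>$ and normal block equal to the identity. Hence $\det D\Phi(x,y)=\det A$, and the defining condition of $U$ forces $A\geq 0$, which gives the first inequality.

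For the upper bound, the arithmetic--geometric mean inequality applied to the non-negative $A$ gives $\det A\leq (\tr A/n)^n$, and the scalar estimate $(t/n)^n\leq e^{t-n}$ (valid for $t\geq 0$) upgrades this to $\det A\leq e^{\tr A-n}$. Since $\tr II=H$, we have $\tr A=\De u-\<H,y\>$. I would then solve \eqref{eq5.2} for $\De u$ (after dividing by $f$) and substitute, reducing everything to the identity
\begin{equation*}
\tr A - n = \log f - |\n\log f|^2 - \n\log f\cdot\n u + \left|\tfrac12 x^\perp + H\right|^2 - |H|^2 - \<H,y\> + \a - n.
\end{equation*}

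The critical step is a double completion of squares. Using $-|\n\log f|^2 - \n\log f\cdot\n u = -|\n\log f+\tfrac12\n u|^2 + \tfrac14 |\n u|^2$ and $-|H|^2 - \<H,y\> = -|H+\tfrac12 y|^2 + \tfrac14 |y|^2 = -\tfrac14|2H+y|^2 + \tfrac14|y|^2$, discarding the non-positive square in the $f$-gradient, and exploiting the ambient orthogonality $\n u\in T_x\S$, $y\in T_x^\perp\S$ to identify $|\n u|^2+|y|^2 = |\Phi(x,y)|^2$, the upper bound collapses to exactly the exponent in \eqref{eq42}. I expect the main obstacle to be the initial block-triangular identification of $D\Phi$: one must track carefully how $\n u(x)$ and $y$ vary under ambient differentiation along $\S$ and verify that the off-diagonal block involving $II(e_i,\n u)$ does not contribute to $\det D\Phi$. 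The remainder is algebraic bookkeeping; the new term $\left|\tfrac12 x^\perp + H\right|^2$ from \eqref{eq5.2}, which vanishes on self-shrinkers, simply passes through into the exponent, and is precisely the correction distinguishing Theorem \ref{thm2} from its self-shrinker specialization Theorem \ref{thm1}.
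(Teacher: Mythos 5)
Your proposal is correct and follows essentially the same route as the paper: the paper also cites \cite{Brendle21} for the identity $\det D\Phi=\det(\n^2u-\<II,y\>)\ge 0$, then bounds $\det A\le e^{\tr A-n}$, solves \eqref{eq5.2} for $\Delta u-\<H,y\>$, and completes the two squares exactly as you describe. Your AM--GM step and the identification $|\n u|^2+|y|^2=|\Phi|^2$ simply spell out what the paper leaves compressed.
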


\begin{proof}The equality has been proved in \cite{Brendle21}. We show the last inequality.
	From Equation \eqref{eq5.2}
 we have
 \begin{eqnarray*}
\Delta u(x)-\<H(x), y\> &=&\ds\vs
\log f_\e(x) -|\n \log f_\e(x)|^2-\n \log f_\e(x) \cdot \n u(x)\\ &&  -|H(x)|^2 +\left|\frac 12 x^\perp+H(x)\right|^2 +\a-\<H(x), y\>\\
&\le & \ds \log f_\e(x) +\frac {|\n u(x)|^2+|y|^2}4 -\frac {|2H(x)+y|^2} 4\\&&+\left|\frac 12 x^\perp+H(x)\right|^2 +\a.
 \end{eqnarray*}
We then have
\begin{eqnarray*}
   0&\le& \det (\n^2 u(x)-\<II(x), y\>) 
\le e^{\Delta u-\<H(x), y\>-n}
\\&\le& f_\e(x) e^{\frac {|\Phi(x,y)|^2}4-\frac {|2H(x)+y|^2}4+\left|\frac 12 x^\perp + H\right|^2+\a-n}, 
\end{eqnarray*}
since $|\Phi(x,y)|^2= |\n u|^2  +|y|^2$.
\end{proof}

Now we can prove Theorem \ref{thm2}. For the convenience of the reader we restate Theorem \ref{thm2} here.

\begin{theorem}\label{thm_sub}
	Let $\Sigma$ be an $n$-dimensional complete, non-compact submanifold in $\R^{n+m}$ with polynomial 
	volume growth. Then for any positive function $f$ on $\Sigma$ with $\int_\Sigma f dvol =1$, there holds
	\begin{equation}
	\label{LSI4Sub0}
	\begin{array}{rcl} &&\ds\int_\Sigma \left\{ \frac {|\n f|^2} f-f \log f +|H|^2 f\right\}dvol\\&\ge& \ds \vs
 n+\frac n 2 \log(4\pi)  +\int_\Sigma f \left|\frac 12 x^\perp + H\right|^2 dvol -\log \int_{\Sigma}f (x) e^{\left|\frac 12 x^\perp + H\right|^2} dvol.
	\end{array}    
	\end{equation}
\end{theorem}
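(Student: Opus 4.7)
The plan is to run Brendle's ABP transport argument on the perturbed density $f_\e$ and then pass to the limit $\e\to 0$. By a standard exhaustion, I may assume $f \ge 0$ has compact support $K \subset \S$ with $\int_\S f\,dvol=1$. Fix $\e>0$ and set $f_\e=(1+\e)^{-1}(f+\e f_0)$; Lemma \ref{lem3.1a} then produces a pair $(\a_\e,u_\e=u_0+w_\e)$ solving \eqref{eq5.2} for $f_\e$, with the superlinear growth \eqref{eq5.4} and the divergence identity \eqref{div2}. The growth \eqref{eq5.4} is precisely the condition that forces $\Phi_\e(U_\e)=\R^{n+m}$ in \eqref{RRR}, so the ABP area formula applies to $\Phi_\e$.

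Combining the area formula with the Jacobian bound of Lemma \ref{lem3.2} gives
\begin{align*}
(4\pi)^{\frac{n+m}{2}} &= \int_{\R^{n+m}}\!\! e^{-|\xi|^2/4}\,d\xi \ \le\ \int_{U_\e} e^{-|\Phi_\e(x,y)|^2/4}\det D\Phi_\e(x,y)\,dy\,dvol(x)\\
&\le e^{\a_\e-n}\int_\S f_\e(x)\,e^{\left|\frac12 x^\perp+H\right|^2}\!\left(\int_{T_x^\perp\S} e^{-|2H(x)+y|^2/4}\,dy\right)dvol,
\end{align*}
and the inner $m$-dimensional Gaussian in the normal fibre evaluates to $(4\pi)^{m/2}$ after the translation $y\mapsto y-2H(x)$. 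Cancelling $(4\pi)^{m/2}$ on both sides and taking a logarithm yields the ABP lower bound
\[
\a_\e \ \ge\ n+\frac{n}{2}\log(4\pi)-\log\!\int_\S f_\e(x)\,e^{\left|\frac12 x^\perp+H\right|^2}\,dvol.
\]
On the other hand, integrating \eqref{eq5.2} for $f_\e$ over $\S$ and using \eqref{div2} to kill the divergence produces
\[
\a_\e\int_\S f_\e\,dvol = \int_\S\left\{\frac{|\n f_\e|^2}{f_\e}-f_\e\log f_\e+|H|^2 f_\e-\left|\frac12 x^\perp+H\right|^2 f_\e\right\}dvol.
\]
Substituting the ABP lower bound for $\a_\e$ into this identity and sending $\e\to 0$ then delivers \eqref{LSI4Sub0}: one has $\int_\S f_\e\to 1$, while each integrand above converges to its $f$-counterpart by dominated convergence, and continuity of $\log$ handles the remaining term inside the logarithm.

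The main obstacle I foresee is the $\e\to 0$ passage, in particular the uniform control of the entropy $\int_\S f_\e\log f_\e$, the Fisher information $\int_\S |\n f_\e|^2/f_\e$, and the weighted integral $\int_\S f_\e\,e^{|\frac12 x^\perp+H|^2}\,dvol$ appearing inside the logarithm, together with the fact that $\a_\e$ must remain bounded as $\e\to 0$. The polynomial growth of $\S$ and of $|H|$ in Definition \ref{Def1}, combined with the Gaussian decay of the $\e f_0$ component and the compact support of $f$, is precisely what makes each of these integrals uniformly finite and justifies dominated convergence. The surjectivity $\Phi_\e(U_\e)=\R^{n+m}$, itself a consequence of the superlinear growth \eqref{eq5.4} coming from the canonical pair $(u_0,f_0)$, is the single geometric input without which the ABP step would collapse in the non-compact setting.
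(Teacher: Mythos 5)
Your proposal is correct and follows essentially the same route as the paper's own proof: perturb $f$ by $f_\e = (1+\e)^{-1}(f+\e f_0)$, invoke Lemma \ref{lem3.1a} for the pair $(u_\e,\a_\e)$ with superlinear growth and the divergence identity, run the ABP/area-formula argument with the Jacobian bound of Lemma \ref{lem3.2} to get $\a_\e \ge n+\frac n2\log(4\pi)-\log\int_\S f_\e e^{|\frac12 x^\perp+H|^2}$, integrate \eqref{eq5.2} against \eqref{div2}, and let $\e\to 0$. Your extra paragraph on why dominated convergence handles the $\e\to 0$ passage is a useful elaboration of a step the paper treats tersely, but it does not change the argument.
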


\begin{proof} We first show \eqref{LSI4Sub0} for $f_\e$. From \eqref{RRR} and \eqref{eq42} we have
	\begin{equation}
	\begin{aligned}
	1&=(4\pi)^{-\frac{n+m}{2}}\int_{\mathbb R^{n+m}}e^{-\frac{|\xi|^2}{4}}d\xi\\
	&\leq (4\pi)^{-\frac{n+m}{2}} \int_\Sigma \left(\int_{T^ \perp_x\Sigma}
	e^{-\frac{|\Phi(x,y)|^2}{4}}|\det D\Phi(x,y)|1_{U}(x,y)\, dy \right) dvol(x)\\
	&\leq (4\pi)^{-\frac{n+m}{2}} \int_\Sigma f_\e(x)\left(\int_{T^ \perp_x\Sigma}e^{-\frac {|2H(x)+y|^2} 4+\left|\frac 12 x^\perp + H\right|^2+\a -n}
	dy \right) dvol(x)\\
	&= (4\pi)^{-\frac{n}{2}} \int_\Sigma f_\e(x)e^{\left|\frac 12 x^\perp + H\right|^2+\a -n}
	dvol(x).
	\end{aligned}
	\end{equation} 
	It follows
	that
	\[
	\a\ge n+\frac n 2 \log (4\pi) -\log\int_\S f_\e(x) e^{\left|\frac 12 x^\perp + H\right|^2}.
	\]
	Integrating Equation  \eqref{eq5.2} over $\Sigma$, together with \eqref{div2}, we have
\[\begin{array}{rcl}
  &&\ds\vs \int_\Sigma \left\{
  -f_\e  \log f_\e + f_\e|\n \log f_\e|^2 - \left|\frac 12 x^\perp + H\right|^2 f_\e+|H|^2 f_\e  \right\} dvol \\
  &=&  \ds\vs \a \int_\Sigma f_\e  dvol =\a  \\
  &\ge& \ds  n+\frac n 2 \log (4\pi) -\log\int_\S f_\e(x) e^{\left|\frac 12 x^\perp + H\right|^2}.
  \end{array}
\]
By letting $\e\to 0$, we get the assertion.
\end{proof}

\begin{remark}
    By Jensen's inequality we know 
\[
\int_\Sigma f \left|\frac 12 x^\perp + H\right|^2 dvol -\log \int_{\Sigma}f (x) e^{\left|\frac 12 x^\perp + H\right|^2} dvol\le 0.
\]
Hence \eqref{LSI4Sub0}
is weaker than
\begin{equation}
\label{LSI_Sub_B}
\begin{array}{rcl} \ds\int_\Sigma\left\{ \frac {|\n f|^2} f-f \log f +|H|^2 f\right\}dvol \ge \ds\vs n+\frac n 2 \log(4\pi),
\end{array}    
\end{equation}
which was proved by Brendle for compact submanifolds without boundary. It would be interesting to ask if \eqref{LSI_Sub_B} is also true for non-compact submanifolds with the polynomial growth condition.
\end{remark}

It is interesting to see that in the important case, where  $\Sigma$ is a self-shrinker, ie.
\[
\frac 12 x^\perp+H=0,
\]
\eqref{LSI4Sub0} is the same as \eqref{LSI_Sub_B}. Now we prove Theorem \ref{thm1}.
\begin{theorem}
	Let $\Sigma$ be an $n$ dimensional complete, non-compact properly embedded self-shrinker in $\R^{n+m}$. Then for any positive function $f$ on $\Sigma$ with $\int_\Sigma f dvol =1$ \eqref{LSI_Sub_B} holds. Equivalently, for 
	any positive function $\varphi$ with
	$\int_\S \varphi d\gamma=1$ $(d\gamma =\gamma_0 dvol)$, there holds
 \begin{equation}\label{LSI_soliton}
	\int_\S \varphi \log\varphi \,d\gamma \le \int_\S \frac {|\n \varphi|^2}\varphi \,d\gamma.
	\end{equation}
\end{theorem}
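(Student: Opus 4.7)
The plan is to derive the theorem directly from Theorem~\ref{thm_sub}, followed by a short substitution identity that converts the $f$-formulation \eqref{LSI_Sub_B} into the Gaussian $\varphi$-formulation \eqref{LSI_soliton}.

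First I would verify the polynomial growth hypothesis of Definition~\ref{Def1}. By Ding--Xin \cite{Ding-Xin} and Cheng--Zhou \cite{Cheng-Zhou}, a properly embedded self-shrinker has polynomial extrinsic volume growth, and the self-shrinker equation $H=-\tfrac12 x^\perp$ gives $|H|\le \tfrac12|x|$, so $|H|$ has at most linear (hence polynomial) growth. Thus Theorem~\ref{thm_sub} applies to $\Sigma$. Since $\tfrac12 x^\perp + H \equiv 0$, the two correction terms on the right-hand side of \eqref{LSI4Sub0} vanish identically: the integral because its integrand is zero, and the log-term because $\log\int_\Sigma f\, dvol = \log 1 = 0$. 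Therefore \eqref{LSI4Sub0} collapses exactly to \eqref{LSI_Sub_B}.

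It remains to see that \eqref{LSI_Sub_B} is equivalent to \eqref{LSI_soliton}. Setting $f = \varphi \gamma_0$ (so that $\int f\,dvol = \int \varphi\,d\gamma = 1$) and using $\nabla \log \gamma_0 = -\tfrac12 x^\top$, direct expansion produces
\[
\tfrac{|\nabla f|^2}{f} = \gamma_0\Bigl(\tfrac{|\nabla \varphi|^2}{\varphi} - \langle \nabla \varphi, x^\top\rangle + \tfrac{\varphi|x^\top|^2}{4}\Bigr), \quad -f\log f = \gamma_0\Bigl(-\varphi\log\varphi + \tfrac{n}{2}\log(4\pi)\varphi + \tfrac{|x|^2}{4}\varphi\Bigr),
\]
while $|H|^2 f = \tfrac14|x^\perp|^2\varphi\gamma_0$ on the shrinker. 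Summing, using $|x|^2 = |x^\top|^2 + |x^\perp|^2$ to collapse the quadratic terms into $\tfrac12|x|^2$, and using $\int\varphi\,d\gamma = 1$, the inequality \eqref{LSI_Sub_B} becomes
\[
\int_\Sigma \tfrac{|\nabla \varphi|^2}{\varphi}\,d\gamma - \int_\Sigma \varphi\log\varphi\,d\gamma + \Bigl(\int_\Sigma \tfrac{\varphi|x|^2}{2}\,d\gamma - \int_\Sigma \langle\nabla\varphi,x^\top\rangle\,d\gamma - n\Bigr) \ge 0,
\]
so equivalence with \eqref{LSI_soliton} reduces to the identity $\int_\Sigma \tfrac{\varphi|x|^2}{2}\,d\gamma - \int_\Sigma \langle\nabla\varphi,x^\top\rangle\,d\gamma = n$. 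This in turn follows from integration by parts: $\int \langle\nabla\varphi,x^\top\rangle\,d\gamma = -\int \varphi\,\div(\gamma_0 x^\top)\,dvol$, combined with the formula $\div x^\top = n + \langle x^\perp,H\rangle$ (implicit in Lemma~\ref{lem5.1} via $\Delta u_0 = n + \langle x^\perp, H\rangle$) specialized to $H = -\tfrac12 x^\perp$, which yields $\div(\gamma_0 x^\top) = \gamma_0(n - \tfrac12|x|^2)$ and hence the required identity.

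The main subtlety, as always in the non-compact setting, is justifying the integration by parts (and the application of Theorem~\ref{thm_sub}) for general $\varphi$ not of compact support. This is handled exactly as in Section~2 (cf.\ Lemma~\ref{lem2.4} and Remark~\ref{rmk2.4}): reduce first to compactly supported data, where all manipulations are unambiguous, then pass to a dominated/monotone limit, using the polynomial volume growth of $\Sigma$ against the Gaussian decay of $\gamma_0 = e^{-|x|^2/4}$ to kill the boundary terms. No new difficulty arises beyond what the machinery of the preceding sections already absorbs.
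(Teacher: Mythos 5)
Your proof is correct and follows essentially the same route as the paper: verify the polynomial growth hypothesis from Ding--Xin and Cheng--Zhou plus $|H|=\tfrac12|x^\perp|\le\tfrac12|x|$, apply Theorem~\ref{thm_sub}, observe that the self-shrinker equation kills the correction terms, and then convert the $f$-form to the $\varphi$-form by substituting $f=\varphi\gamma_0$. The only cosmetic difference is in the last step: the paper quotes the pointwise identity from Brendle's computation, reduces everything to $\int_\Sigma{\rm div}_\Sigma(fx^T)\,dvol$, and shows this is $\ge 0$ directly by rewriting the integral over $\Sigma\cap B_R$ as the boundary term $\int_{\Sigma\cap\partial B_R}f|x^T|\,dA\ge 0$ (which handles non-compactly supported $f$ without any approximation), whereas you expand the three terms separately, collapse them via $|x^\top|^2+|x^\perp|^2=|x|^2$, and argue the leftover term equals $n$ by integration by parts for compactly supported $\varphi$ followed by approximation. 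Your integration-by-parts identity is correct (indeed $\div(\gamma_0 x^\top)=\gamma_0(n-\tfrac12|x|^2)$ on a self-shrinker), and for compactly supported data it yields exact equality, so the approximation step closes the argument as you indicate.
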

\begin{proof}
	From \cite{Ding-Xin} and \cite{Cheng-Zhou} we know that for a self-shrinker the properly embeddedness is equivalent to the extrinsic polynomial volume growth. Moreover the self-shrinker condition clearly implies that the norm of $H$ has linear growth. Hence Theorem \ref{thm_sub} implies \eqref{LSI_Sub_B}, since $\S$ is a self-shrinker.   {As computed in \cite{Brendle22}, we see
   \begin{eqnarray*}
        &&\frac {|\n f|^2} f-f \log f +|H|^2 f -(n+\frac n 2 \log (4\pi))f
        \\&=&   \frac {|\n \varphi|^2}\varphi \g_0- \varphi \log\varphi  \g_0 +\left|\frac 12 x^\perp + H\right|^2 f-{\rm div}_\S(fx^T).  
   \end{eqnarray*}
   Integrating over $(\S, dvol)$ and using that $\S$ is a self-shrinker and $\int_\S f=1$, we see that \eqref{LSI_Sub_B} is equivalent to
    \begin{eqnarray*}
        &&   \int_\S \frac {|\n \varphi|^2}\varphi d\g- \varphi \log\varphi  d\g \ge \int_\S{\rm div}_\S(fx^T) dvol.  
   \end{eqnarray*}
 Note that for each $R>0$ and $B_R\subset \R^{n+m}$, 
   \begin{eqnarray*}
   \int_{\S\cap B_R}{\rm div}_\S(fx^T) dvol=
   \int_{\S\cap \p B_R}f|x^T| dA \ge 0.
    \end{eqnarray*}
 \eqref{LSI_soliton} follows by letting $R\to \infty$.
   }
\end{proof}



\begin{remark} In Theorem \ref{thm1} without the normalization $\int_\S \varphi d\gamma=1$, \eqref{eq0.3} is written as
\begin{equation}
	\label{eq_a1}
	\int_\S \frac {|\n \varphi|^2}\varphi \,d\gamma \ge \int_\S \varphi \log\varphi \,d\gamma -\int_\S \varphi d\gamma \log \int_\S \varphi d\gamma.
	\end{equation}
	In general $d\g=\gamma_0 d vol$ may not be a probability measure on $\S$. In fact,
	$\int_\Sigma 1 d \gamma=\l(\Sigma)$, the entropy of $\Sigma$, since $\S$ is self-shrinker. See \cite{Colding-Minicozzi}. If we use the probability measure, $d\tilde \gamma= \l(\Sigma)^{-1} d\gamma$, then \eqref{eq0.2} can be written as 
	\begin{equation}
	\label{eq_a2}
	\int_\S \frac {|\n \psi|^2}\psi \,d \tilde \gamma - \int_\S \psi\log\psi \,d\tilde \gamma 
 \ge -\log \l(\Sigma)  
	\end{equation}
	for any non-negative $\psi$
 with $\int_\S \psi d\tilde \gamma =1$. For such a self-shrinker $\Sigma$, we know $\l(\Sigma)\ge 1$. See \cite{Colding-Minicozzi}.
We define an invariant by \[\mu_\S:=\inf _ {0\not \equiv \psi\ge 0, \int_\S \psi d\tilde \gamma =1}  \left\{	 \int_\Sigma \frac {|\n \psi|^2}\psi  \,d\tilde \gamma- 	\int _\Sigma \psi \log \psi \,d\tilde \gamma  \right\}.\]\eqref{eq_a2} means that\begin{equation}\mu_\S\ge -\log \l(\S).\end{equation}It would be interesting to discuss the relationship between   $\mu_\S$ and $ -\log \l(\S)$.

\end{remark}



\begin{remark}
    \label{rmk3.6}
    Let $f_\tau = (4\pi\tau)^{\frac n 2} e^{-\frac{|x|^2}{4\tau}}$ for $\tau>0$. It is easy to check as above that $u_0=\frac 12 |x|^2$ satisfies
    \[
    f_\tau ^{-1}\div (f_\tau \n u_0)=\log f_\tau -\tau |\n \log f_\tau|^2 -\tau |H|^2 +\tau |H+\frac {x^\bot} {2\tau} |^2 +\a_\tau,
    \]
    where $\a_\tau=n+\frac n 2 \log(4\pi\tau).$ The same proof yields
    the following inequality
\begin{equation}
	\label{hin}
	\begin{array}{rcl} &&\ds\int_\Sigma \left\{\tau  \frac {|\n f|^2} f-f \log f +\tau |H|^2 f\right\}dvol\\&\ge& \ds\vs n+\frac n 2 \log(4\pi\tau)  \ds +\int_\Sigma \tau \left|\frac 1{2\tau} x^\perp + H\right|^2  f\,dvol -\log \int_{\Sigma} e^{\tau \left|\frac 1{2\tau} x^\perp + H\right|^2} f\, dvol,
	\end{array}    
	\end{equation}
which implies that
\begin{equation}
	\label{Last2}
	\begin{array}{rcl} \ds\int_\Sigma \left\{\tau  \frac {|\n f|^2} f-f \log f +\tau |H|^2 f\right\}dvol-\frac n2 \log(4\pi\tau)\ge 
   n,
	\end{array}    
	\end{equation}
 for non-negative function $f$ with $\int_\S f=1$,
 if $\S$ is a properly embedded self-shrinker. 
 \end{remark}

From above remarks one  can compare the 
logarithmic Sobolev inequalities for self-shrinkers of mean curvature flow and the ones for gradient solitons of Ricci flow obtained by Carrillo-Ni \cite{C-Ni} and Hein-Naber \cite{Hein-Naber}, where the non-negativity of the Ricci curvature
or the $C(K,\infty)$ condition play a crucial role.

\

\noindent{\it Acknowledgment.} A part of the paper was carried out while the first  and the third named authors were visiting McGill university. They would like to thank Professor Pengfei Guan for his warm hospitality.

After submitting the paper for publication, we learnt that the similar inequality was also proved by Z. Balogh and  A Kristály in
\cite{BK24} using optimal transport.

\end{document}